\documentclass[12pt, reqno]{amsart}

\usepackage{amscd,amsthm,amsfonts,amssymb,amsmath,esint}
\usepackage{amsmath,tikz-cd}
\usepackage[colorlinks=true]{hyperref}
\usepackage{fullpage}
\usepackage[all]{xy}
\usepackage{mathtools}
\usepackage{mathrsfs}
\usepackage[pagewise]{lineno}
\usepackage{cite}
    \newcommand{\BC}{{\mathbb {C}}}

    \newcommand{\BQ}{{\mathbb {Q}}} 
    \newcommand{\BS}{{\mathbb {S}}}

     \newcommand{\BZ}{{\mathbb {Z}}}

    \newcommand{\RO}{{\mathrm {O}}}

    \newcommand{\fa}{{\mathfrak{a}}}

     \newcommand{\fh}{{\mathfrak{h}}}

     \newcommand{\fn}{{\mathfrak{n}}}

    \newcommand{\fS}{{\mathfrak{S}}}

    \newcommand{\Frob}{{\mathrm{Frob}}}

     \newcommand{\GL}{{\mathrm{GL}}}

    \newcommand{\Ind}{{\mathrm{Ind}}}

    \newcommand{\ord}{{\mathrm{ord}}}

    \renewcommand{\Re}{{\mathrm{Re}}}

    \newcommand{\Res}{{\mathrm{Res}}}

    \newcommand{\SL}{{\mathrm{SL}}}

    \newcommand{\Sym}{{\mathrm{Sym}}}

    \theoremstyle{plain}
    \newtheorem{thm}{Theorem}[section] \newtheorem{cor}[thm]{Corollary}
    \newtheorem{lem}[thm]{Lemma}  \newtheorem{prop}[thm]{Proposition}

 \newtheorem{def-prop}[thm]{Definition-Proposition}

\theoremstyle{remark} \newtheorem{remark}[thm]{Remark}
\theoremstyle{remark} 
\theoremstyle{remark} \newtheorem*{example}{Example}

    \numberwithin{equation}{section}

\begin{document}

\title{Higher moments for symmetric powers of modular forms}

\begin{abstract}
    Let $f$ be a cuspidal eigenform of weight $k$ on $\SL_2(\BZ)$ and let $\lambda_{\Sym^d f}(n)$ be the normalized Fourier coefficients of its $d$-th symmetric power lift. This paper establishes asymptotic formulas for the moments $\sum_{n\leq x}\lambda^l_{\Sym^d f}(n)$ for all positive integers $d$ and $l$. We also prove an asymptotic formula for the corresponding sum over the values of any positive definite binary quadratic form $Q$. Our results generalize and improve upon previous work, which was limited to small values of $d$ or  $l$. The proofs rely on the decomposition of $\ell$-adic Galois representations and the analytic properties of the associated $L$-functions. 
    \end{abstract}

\author{Jiong Yang and Zhishan Yang}
\thanks{The first author  is supported by the Natural Science Foundation of Shandong Province (NO.ZR2022QA097).}
\address{School of Mathematics and Statistics, Qingdao University, Qingdao, Shandong, 266000, People's Republic of China; }
\email{yangjiong@qdu.edu.cn}
\address{School of Mathematics and Statistics, Qingdao University, Qingdao, Shandong, 266000, People's Republic of China }
\email{zsyang@qdu.edu.cn}

\maketitle

\tableofcontents

\section{Introduction}
%In this paper, we estimate the average behavior of various of higher moments associated to  Fourier coefficients of modular forms.

Let $f$ be a cuspidal  eigenform of weight $k$ on $\SL_2(\BZ)$,  and let
\[f(z)=\sum_{n=1}^\infty \lambda_f(n)n^{\frac{k-1}{2}}q^n\]
be its $q$-expansion. Estimates of $\lambda_f(n)$ and their average behavior are fundamental problems in number theory. 
%By Deligne's work on Ramanujan's conjecture (\cite{deligne1974conjecture}), one has
%\[\lambda_f(n)\leq d(n)\]
%for all $n\geq 1$ with $d(n)$ the divisor function.
 Estimating the sum
$\sum\limits_{n\leq x}\lambda_f(n),$ has a long history, with contributions from
 \cite{kloosterman1927asymptotische}, \cite{rankin1983sums}, \cite{weil1948some}, \cite{wu2009power}, among others. The sum of higher power moments 
$\sum\limits_{n\leq x}\lambda_f^l(n)$
has also attracted considerable attention. If $l=2$, Rankin\cite{rankin1939contributions} and Selberg\cite{selberg1940bemerkungen} already studied this problem and recently Huang's work \cite{huang2021rankin} provides improved estimates with
\[\sum_{n\leq x}\lambda_f^2(n)=Cx+O\left(x^{\frac{3}{5}-\frac{1}{560}+\epsilon}\right).\]

For $l\geq3$,  symmetric power $L$-functions of modular forms are required.  Kim and  Kim \& Shahidi   \cite{kim2003functoriality}, \cite{kim2002cuspidality},\cite{kim2002functorial} showed that  $\Sym^l f$ is automorphic for $l\leq 4$. Using these results,  the estimates for $l\leq 8$  are derived in  \cite{lau2011integral}, \cite{lu2009average} and \cite{lu2011higher}. Later Zhai\cite{zhai2013average} considered the sum over sums of two squares
\[\sum_{n_1^2+n_2^2\leq x}\lambda_f^l(n_1^2+n_2^2),\]
and provided estimates for $l\leq 8$.

Recently, Newton \& Thorne \cite{newton2021symmetric1},\cite{newton2021symmetric2} proved the automorphy and cuspidality of $\Sym^l f$ for all $l\geq1$. Using this celebrated work, Xu\cite{xu2022general} derived a general formula for $\sum\limits_{n\leq x}\lambda_f^l(n)$ and $\sum\limits_{n_1^2+n_2^2\leq x}\lambda_f^l(n_1^2+n_2^2)$ for every $l\geq1$. This result was later refined by Liu\cite{liu2023asymptotic} as
\begin{equation}\label{equation:introduction1}
  \sum_{n\leq x}  \lambda^l_f(n)=xQ_l(\log x)+O\left(x^{\theta_l+\epsilon}\right),
\end{equation}
\begin{equation}\label{equation:introduction2}
 \sum_{n_1^2+n_2^2\leq x}\lambda_f^l(n_1^2+n_2^2)=xQ'_l(\log x)+O\left(x^{{\theta}'_l+\epsilon}\right),
\end{equation}
where $Q_l$ and $Q'_l$ are polynomials of degree $\binom{l}{m}-\binom{l}{m-1}-1$  if $l=2m$ and equals 0 if $l$ is odd. Here $\theta_l, {\theta}'_l$ are explicit constants less than 1.

Another direction of research concerns the average behavior of  Fourier coefficients of symmetric powers of $f$. Several results have been obtained in this direction, but  they only focus on some special cases. For example,  in \cite{fomenko2006identities} and \cite{fomenko2008mean},  Fomenko proved asymptotic formulas for 
\[\sum_{n\leq x}\lambda_{\Sym^2 f}(n)\ \text{and}\ \sum_{n\leq x}\lambda^2_{\Sym^2 f}(n).\]
In \cite{he2019integral} and \cite{luo2021asymptotics},  the summations \[\sum_{n\leq x}\lambda^l_{\Sym^2 f}(n)\ \text{and}\ \sum_{n\leq x}\lambda^2_{\Sym^d f}(n)\]
with $d\geq 2$ and $l\leq 8$ were studied. Later  in \cite{liu2023average}, Liu obtained the following results with better error terms:
\begin{equation} \label{equation:introduction3}
    \sum_{n\leq x}\lambda^l_{\Sym^2 f}(n)=xP_l(\log x)+O\left(x^{\theta_l+\epsilon}\right)
\end{equation}
for $2\leq l\leq 8$, and
\begin{equation}\label{equation:introduction4}
    \sum_{n\leq x}\lambda^2_{\Sym^d f}(n)=c_dx+O\left(x^{\tilde{\theta}_d+\epsilon}\right),
\end{equation}
for any $d\geq 2$.
Here $P_l$ are polynomials with  explicitly computed degrees, $c_d$ is a constant and $\theta_l, \tilde{\theta}_d$ are positive constants less than 1.

Extending these results to arbitrary integers $d\geq1$ and $l\geq1$ presents significant challenges, as the associated Dirichlet series become increasingly complex. In this paper, we address this problem by studying the underlying Galois representations and applying combinatorial results concerning their decompositions. 
Our work generalizes the average formulas (\ref{equation:introduction1}),(\ref{equation:introduction2}),(\ref{equation:introduction3}),(\ref{equation:introduction4}) in the following aspects.
\begin{itemize}
    \item We obtain asymptotic formulas with improved error terms.
    \item We establish a universal asymptotic formula for $\sum\limits_{n\leq x}\lambda^l_{\Sym^d f}(n)$ valid for any $d\geq 1$  and $l\geq 1$,which generalizes (\ref{equation:introduction3}) and (\ref{equation:introduction4}).
        \item We prove an average formula for sums over values of a binary quadratic form, namely $ \sum\limits_{Q(n_1,n_2)\leq x}\lambda^l_{\Sym^d f}(Q(n_1,n_2))$ for any definite binary quadratic form $Q$ and any $d,l\geq1$.
\end{itemize}
\subsection{Main theorems}
We now state our main results for general $d$ and $l$.
\begin{thm}\label{theorem:main}
    Let $f$ be a cuspidal modular form of weight $k$ on $\SL_2(\BZ)$. For any integers $l\geq2$ and $d\geq1$ and $dl>4$, one has
    \[\sum_{n\leq x}  \lambda^l_{\Sym^d f}(n)=xP_{d,l}(\log x)+O\left(x^{\theta_{d,l}+\epsilon}\right).\]
    Here $P_{d,l}$ is a polynomial of degree $K_{0,d,l}-1$, where $K_{i,d,l}$ denote the Kostka number defined in sections \ref{section:Weylmod} and \ref{section:Kostka number}. The exponent $\theta_{d, l}$ is given by
    $$\theta_{d,l}=1-1/\left((d+1)^l/2-4K_{0,d,l}/21-K_{1,d,l}/3-5K_{2,d,l}/14\right).$$
\end{thm}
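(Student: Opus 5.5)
The plan is to write $L(s)=\sum_{n\ge1}\lambda^l_{\Sym^d f}(n)n^{-s}$ as a finite product of automorphic $L$-functions times a Dirichlet series that is absolutely convergent for $\Re s>1/2$. We then apply Perron's formula together with the standard convexity and subconvexity bounds. We begin with the Satake parameters $\alpha_p,\beta_p=\alpha_p^{-1}$ of $f$. At an unramified prime $p$ (here every prime), $\lambda_{\Sym^d f}(p)=\operatorname{tr}\Sym^d(g_p)$, where $g_p=\mathrm{diag}(\alpha_p,\beta_p)$. The local factor of $L(s)$ at $p$ agrees, up to terms of order $p^{-2s}$, with the local factor of $L(s,\rho)$, where $\rho=(\Sym^d)^{\otimes l}$ viewed as an $\SL_2$-representation.

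The first step is to decompose $(\Sym^d)^{\otimes l}$ via Clebsch--Gordan:
\[(\Sym^d)^{\otimes l}=\bigoplus_{j\ge0}(\Sym^{j})^{\oplus m_j}.\]
By the Kostka number results of Sections~\ref{section:Weylmod} and~\ref{section:Kostka number}, the multiplicities are $m_0=K_{0,d,l}$, $m_1=K_{1,d,l}$, $m_2=K_{2,d,l}$, and in general $m_j=K_{j,d,l}$ for the relevant weights. Since the central character forces $j\equiv dl \pmod 2$, only $j$ of the same parity as $dl$ occur.

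Using the $\ell$-adic Galois representation attached to $f$ and the Newton--Thorne automorphy of every $\Sym^j f$, this gives
\[L(s)=\zeta(s)^{K_{0,d,l}}\prod_{j\ge1}L(s,\Sym^j f)^{K_{j,d,l}}\,U(s),\]
where $U(s)$ is holomorphic and bounded in $\Re s\ge 1/2+\epsilon$ by Deligne's bound. The condition $dl>4$, with $l\ge2$, is presumably what guarantees that the displayed exponent is consistent, for instance that it beats the trivial bound. When $dl$ is odd, $K_{0,d,l}=0$ and $P_{d,l}=0$. Since each $L(s,\Sym^j f)$ with $j\ge1$ is entire and nonvanishing at $s=1$, the pole at $s=1$ has order exactly $K_{0,d,l}$. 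Its residue gives $xP_{d,l}(\log x)$ with $\deg P_{d,l}=K_{0,d,l}-1$.

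For the error term, we apply truncated Perron's formula at height $T$ and shift the contour to $\Re s=1/2+\epsilon$. On that line we bound the factors individually.
\begin{itemize}
\item For $\zeta$ we use a subconvexity or moment bound. The exponent $4/21$ (coefficient of $K_0$) suggests using $\zeta(1/2+it)\ll t^{13/84+\epsilon}$ (Bourgain) in combination with mean values.
\item For $L(s,f)$ (coefficient $1/3$) we use the Good or Jutila type bound $t^{1/3+\epsilon}$.
\item For $L(s,\Sym^2 f)$ (coefficient $5/14$) we use the available subconvex mean-value estimates, roughly $t^{6/7}$ versus convexity $t^{3/4}$ scaled appropriately.
\item For $j\ge3$ we use the convexity bound $t^{(j+1)/4+\epsilon}$.
\end{itemize}
Summing the convexity exponents gives $\sum_j K_{j,d,l}(j+1)/4$, and $\sum_j K_{j,d,l}(j+1)=(d+1)^l$ by dimension count. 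So the total exponent is $(d+1)^l/4$ minus the savings from the three improved bounds. Let $A$ denote the resulting growth exponent on $\Re s=1/2$ (measured in the form $\int_T^{2T}|L|\,dt\ll T^{2A\cdot\frac12+\dots}$). Balancing $x^{1/2}T^{A}$ against $x/T$ gives the error $x^{1-1/(A+1)}$. The horizontal segments are handled by Phragmén--Lindelöf, and the formula for $\theta_{d,l}$ should come out of this optimization.

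The main obstacle is the bookkeeping in the last step. We must choose the precise combination of pointwise and mean-square bounds for $\zeta$, $L(s,f)$ and $L(s,\Sym^2 f)$, using Hölder on the dyadic segment $[T,2T]$, so that the savings come out exactly as $4K_0/21$, $K_1/3$ and $5K_2/14$ relative to $(d+1)^l/2$. We also need to check the small cases near $dl=4$, where the number of factors is too small for the optimization to be valid.
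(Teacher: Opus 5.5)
Your overall architecture is the paper's: factor the series as $\prod_i L(\Sym^i f,s)^{K_{i,d,l}}$ times an Euler product convergent in $\Re s>1/2$, read off the pole of order $K_{0,d,l}$, then apply Perron with the contour moved to $\Re s=\tfrac12+\epsilon$. The gap is in the error term. You defer it as ``bookkeeping'', but it needs an ingredient you have not identified.

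The savings $4/21$, $1/3$, $5/14$ are purely pointwise. They equal $2(\tfrac14-\tfrac{13}{84})$, $2(\tfrac12-\tfrac13)$ and $2(\tfrac34-\tfrac47)$, and come from Bourgain's $\zeta(\tfrac12+it)\ll t^{13/84+\epsilon}$, Good's $L(f,\tfrac12+it)\ll t^{1/3+\epsilon}$, and the Dasgupta--Leung--Young bound $L(\Sym^2 f,\tfrac12+it)\ll t^{4/7+\epsilon}$. Your ``$t^{6/7}$'' is worse than convexity and saves nothing, and no mean value for $\zeta$ is involved. Set
\[A=\tfrac{13}{84}K_{0,d,l}+\tfrac13K_{1,d,l}+\tfrac47K_{2,d,l}+\sum_{i\ge3}\tfrac{i+1}{4}K_{i,d,l},\]
so that $2A=(d+1)^l/2-4K_{0,d,l}/21-K_{1,d,l}/3-5K_{2,d,l}/14$ and the claimed exponent is $\theta_{d,l}=1-1/(2A)$. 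Pointwise bounds alone give $J_1\ll x^{1/2+\epsilon}T^{A+\epsilon}$. Balancing against $x/T$ then yields $1-1/(2A+2)$, which is strictly weaker than $\theta_{d,l}$. (Your ``$x^{1-1/(A+1)}$'' is also an arithmetic slip.)

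The missing idea is an extra factor $T^{-1}$ in the vertical integral. This is what the $T^{-1}$ in the paper's bound for $J_1$ encodes, together with the mean-value part of Perelli's lemma. You need
\[\int_{T/2}^{T}|L(\tfrac12+\epsilon+it)|\,dt\ll T^{A+\epsilon}\quad\text{rather than}\quad T^{A+1+\epsilon}.\]
To get it:
\begin{itemize}
\item Keep the pointwise subconvex bounds for $\zeta$, $L(f)$ and $L(\Sym^2 f)$.
\item Split the convexity part $\prod_{i\ge3}L(\Sym^i f,s)^{K_{i,d,l}}$ into two products $G_1,G_2$ of degrees $m_1,m_2\ge2$.
\item Apply Cauchy--Schwarz with the second-moment bound $\int_{T/2}^T|G_j(\tfrac12+\epsilon+it)|^2\,dt\ll T^{m_j/2+\epsilon}$. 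This replaces the pointwise contribution $T^{1+(m_1+m_2)/4}$ by $T^{(m_1+m_2)/4+\epsilon}$.
\end{itemize}
The horizontal segments give $x^{1/2}T^{A-1}+x/T$ by Phragm\'en--Lindel\"of. Taking $T=x^{1/(2A)}$ then yields exactly $\theta_{d,l}$.

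This is also where $dl>4$ enters, rather than as a condition that the exponent beat the trivial bound. Since $K_{dl,d,l}=1$ and $K_{dl-2,d,l}=l-1\ge1$, the hypothesis guarantees two factors $L(\Sym^{dl}f)$ and $L(\Sym^{dl-2}f)$ with index $\ge3$ (degrees $\ge6$ and $\ge4$). One goes into $G_1$ and the rest into $G_2$. When $dl=4$ only one such factor exists, and the split fails.
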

In Corollary \ref{corollary:Kostka},  we  provide an explicit formula for the Kostka number $K_{i,d,l}$, which determines the degrees of the main terms and the exponents in the error terms. The formula is 
\begin{align*}        K_{i,d,l}=\begin{cases}
            0\ \ \ \ \ \ \ \ \ \ \ \ \ \ \ \ \ \ \ \ \ \ \ \ \ \ \  \ \ \ \ \ \ \   \ \ \ \ \ \ \ \ \ \ \ \ \text{if $2\nmid dl-i$}\\
            \sum\limits_{j=0}^{\lfloor(dl-i)/(2d+2)\rfloor}(-1)^j\binom{l}{j}\binom
        {(dl-i)/2-j(d+1)+l-2}{l-2}
    \ \text{otherwise}
        \end{cases}.
    \end{align*}
Note that  if $dl$ is odd, then $K_{0,d,l}=0$ and $P_{d,l}$ is 0.

The estimation in the general case is new. And our estimates in the error terms are better than those in the work of \cite{liu2023average} for small $d$ and $l$. We list the cases with $d=2,3\leq l\leq8$ and $l=2,3\leq d\leq8$ in the following tables.

\begin{center}
%\caption{Previous exponents, $\theta_{l, j}$ from Theorem \ref{theorem:main} }
\begin{tabular}{|c|c|c|}
\hline
$d=2, l=$ &  Exponents in \cite{liu2023average} & $\theta_{d, l}$ \\
\hline
3 & 0.9193... & 0.918287938
 \\
\hline
4 & 0.9737... & 0.973534972
  \\
\hline
5 & 0.99136... & 0.991304348
\\
\hline
6 & 0.99714... & 0.99713291
 \\
\hline
7 & 0.9990558... & 0.999051362
 \\
\hline
8 & 0.9996868... & 0.999685565
  \\
\hline
\end{tabular}
\end{center}

\begin{center}
\begin{tabular}{|c|c|c|c|}
\hline
$l=2, d=$ & Exponents in \cite{liu2023average} & $\theta_{d, l}$ \\
\hline
3 & 0.866... & 0.865814696
 \\
\hline
4 & 0.9166... & 0.916334661
  \\
\hline
5 & 0.9428... & 0.942701228
 \\
\hline
6 & 0.9583... & 0.958250497
 \\
\hline
7 & $0.96824\ldots$ &0.968205905
  \\
\hline
8 & 0.97499... & 0.974970203
 \\
\hline
\end{tabular}
\end{center}

We also have the following theorem for summation over binary quadratic forms.
\begin{thm}\label{thm:binary}
      Let $f$ be a cuspidal modular form of weight $k$ on $\SL_2(\BZ)$. For any integers $l\geq2,d\geq1$ with $dl>4$ and any binary quadratic form $Q(n_1,n_2)$, one has
       \[\sum_{Q(n_1,n_2)\leq x}  \lambda^l_{\Sym^d f}(Q(n_1,n_2))=xP_{d,l,Q}(\log x)+O\left(x^{\theta_{d,l,Q}+\epsilon}\right).\]
        Here $P_{d,l,Q}$ is a polynomial of degree $K_{0,d,l}-1$.  If the class number of $Q$ is not 1,  then 
        $$\theta_{d,l,Q}=1-\frac{3}{3(d+1)^l-K_{0,d,l}};$$ if the class number is 1, then 
        $$\theta_{d,l,Q}=1-1/\left((d+1)^l-8K_{0,d,l}/21-2K_{1,d,l}/3-5K_{2,d,l}/7\right).$$
\end{thm}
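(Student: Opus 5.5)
We reduce the sum over values of $Q$ to a Dirichlet series over an imaginary quadratic field. Let $D<0$ be the discriminant of $Q$ and $K=\BQ(\sqrt D)$. Then
\[
r_Q(n)=\#\{(n_1,n_2):Q(n_1,n_2)=n\}
\]
is a combination of class group characters. Precisely, $r_Q(n)=\frac{w}{h}\sum_{\chi}\bar\chi(C_Q)\sum_{N\mathfrak a=n}\chi(\mathfrak a)$, where $\chi$ runs over the characters of the class group of $K$. So
\[
\sum_{n}\frac{\lambda^l_{\Sym^d f}(n)\,r_Q(n)}{n^s}=\frac{w}{h}\sum_\chi\bar\chi(C_Q)\,L_\chi(s),\qquad L_\chi(s)=\sum_{\mathfrak a}\frac{\lambda^l_{\Sym^d f}(N\mathfrak a)\chi(\mathfrak a)}{N\mathfrak a^{s}}.
\]

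\textbf{Step 1: factor each $L_\chi$.} At unramified primes $\lambda_{\Sym^d f}(p)$ is the trace of $\Sym^d$ of the $\ell$-adic representation $\rho_f$. So $\lambda^l_{\Sym^d f}(p)$ is the trace of $(\Sym^d\rho_f)^{\otimes l}$, which decomposes as $\bigoplus_i K_{i,d,l}\Sym^i\rho_f$. This is the same decomposition used for Theorem \ref{theorem:main}.

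For $\chi$ trivial, $L_1$ is the Dedekind-type series
\[
L_1(s)=\prod_i\bigl(L(s,\Sym^i f)\,L(s,\Sym^i f\otimes\chi_D)\bigr)^{K_{i,d,l}}\,U(s),
\]
where $U(s)$ is absolutely convergent in $\Re s>1/2$. The constraint $dl>4$ and $l\ge2$ is what guarantees the higher prime-power terms are harmless there.

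For $\chi$ nontrivial, $\chi$ corresponds to a dihedral weight-one form $g_\chi$, and
\[
L_\chi(s)=\prod_i L(s,\Sym^i f\times g_\chi)^{K_{i,d,l}}\,U_\chi(s).
\]
Each $\Sym^i f\times g_\chi$ is cuspidal on $\GL_{2(i+1)}$ by Newton--Thorne and the Rankin--Selberg theory. These factors are entire, with no pole even at $i=0$, because $\chi\neq1$.

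\textbf{Step 2: locate the main term.} The pole at $s=1$ comes only from $\chi=1$, namely from $\zeta_K(s)^{K_{0,d,l}}$. This gives a pole of order $K_{0,d,l}$, so the residue produces $xP_{d,l,Q}(\log x)$ with $\deg P_{d,l,Q}=K_{0,d,l}-1$.

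\textbf{Step 3: bound the error by Perron's formula.} Apply the truncated Perron formula and shift the contour to $\Re s=\sigma<1$. On the new line use convexity bounds, combined with sharper subconvexity or mean-value inputs for the low-degree factors.

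\emph{Class number not $1$.} The dominant term is a nontrivial $L_\chi$ of total degree $\sum_i K_{i,d,l}\cdot2(i+1)=2(d+1)^l$. We use convexity for everything except the $K_{0,d,l}$ copies of $L(s,\chi)$ (Hecke $L$-functions of degree $2$). For those we use a Weyl-type bound of exponent $1/3$ in place of $1/2$, which saves $K_{0,d,l}/3$ in the effective degree. The effective degree is therefore $2(d+1)^l-2K_{0,d,l}/3$. The standard optimisation gives $1-\theta=1/(\text{effective degree}/2)$, i.e.
\[
\theta_{d,l,Q}=1-\frac{3}{3(d+1)^l-K_{0,d,l}}.
\]

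\emph{Class number $1$.} Only $L_1$ appears, so it factors into $\GL$-level $L$-functions over $\BQ$. Bound $\zeta(s)$ and $L(s,\chi_D)$ by the Bourgain-type exponent $13/84$. Bound $L(s,f)$, $L(s,f\otimes\chi_D)$ by the $\GL_2$ Weyl-type exponent $1/3$ of the $\GL_2$ convexity $1/2$. Bound $L(s,\Sym^2 f)$ and its twist by the available $\GL_3$ subconvexity, e.g. $\frac{3}{4}\cdot\frac{15}{14}\cdots$ in the same normalisation as in Theorem \ref{theorem:main}. These are exactly the savings encoded by $4/21$, $1/3$, $5/14$ there; each now applies twice, because of the twist by $\chi_D$. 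This doubling yields
\[
\theta_{d,l,Q}=1-1/\bigl((d+1)^l-8K_{0,d,l}/21-2K_{1,d,l}/3-5K_{2,d,l}/7\bigr).
\]

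\textbf{Main obstacle.} The main obstacle is Step 1 for nontrivial $\chi$: verifying that $L_\chi$ factors with an Euler product $U_\chi$ holomorphic and bounded in $\Re s>1/2$. This needs care at split versus inert primes. At inert primes the local factor involves $p^{-2s}$ and $\chi$ of the prime ideal $(p)$, which is trivial on principal ideals. One must check that the resulting contributions are absorbed at the level of $\Re s>1/2$, and that the integral of the nontrivial pieces does not produce a spurious pole. I would handle this by comparing local Euler factors prime by prime, exactly as was done for the $\zeta$-factors in Theorem \ref{theorem:main}.
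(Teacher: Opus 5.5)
Your architecture matches the paper's: the class-group expansion of $r(n,Q)$, the factorization through $(\Sym^d\rho_f)^{\otimes l}\cong\bigoplus_i K_{i,d,l}\Sym^i\rho_f$, the pole of order $K_{0,d,l}$ coming only from $\zeta(s)^{K_{0,d,l}}$ in the trivial character, Perron's formula with a shift to $\Re s=\frac12+\epsilon$, and the same subconvexity inputs. Your bookkeeping reproduces both values of $\theta_{d,l,Q}$.

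The gap is in Step 3. Let $E$ be the total pointwise exponent of the relevant $L$-function on $\Re s=\frac12+\epsilon$, i.e.\ your effective degree divided by $4$. Your formula $1-\theta=1/(2E)$ requires the vertical integral to satisfy $J_1\ll x^{1/2+\epsilon}T^{E-1+\epsilon}$. That is a full factor of $T$ better than pointwise bounds give. From pointwise convexity and subconvexity alone you only get $\int_1^T|L(\frac12+\epsilon+it)|\,t^{-1}dt\ll T^{E+\epsilon}$. Balancing $x^{1/2}T^{E}$ against $x/T$ then gives only $1-\theta=1/(2E+2)$, which is strictly weaker than the theorem, both when the class number is one and when it is not.

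The missing step is a mean-square argument. On each dyadic block $[T_1/2,T_1]$, bound the subconvex factors pointwise. Then apply Cauchy--Schwarz to two distinct factors that you are already counting at convexity, using for each the second-moment bound $\int_{T_1}^{2T_1}|L(\frac12+it)|^2dt\ll T_1^{m/2+\epsilon}$ from Perelli's lemma. This gives $T_1^{-1}\int_{T_1/2}^{T_1}|L|\,dt\ll T_1^{E-1+\epsilon}$. Applying mean values to the low-degree factors instead, as you suggest, forfeits their subconvex exponents.

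The hypotheses $l\ge2$, $dl>4$ guarantee such a pair exists. $\Sym^{dl}\rho_f$ and $\Sym^{dl-2}\rho_f$ occur with multiplicities $K_{dl,d,l}=1$ and $K_{dl-2,d,l}=l-1\ge1$. Since $dl-2\ge3$, both are among the convexity factors, whether twisted by $g_\chi$ or by $1$ and $\chi_D$.

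Your stated role for this hypothesis is wrong. $U$ and $U_\chi$ converge absolutely in $\Re s>\frac12$ for every $d,l$: the Dirichlet series and the $L$-function have the same $p^{-s}$ coefficient at every prime, and Deligne's bound controls the remaining terms. The same observation disposes of the inert-prime issue you call the main obstacle, since both sides have vanishing $p^{-s}$ coefficient there.

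Two minor slips:
\begin{itemize}
\item The Weyl bound for $L(s,\chi)$ saves $2K_{0,d,l}/3$, not $K_{0,d,l}/3$, in effective degree; your next line has the correct value.
\item The $\GL_3$ input is $L(\Sym^2 f,\frac12+it)\ll|t|^{4/7+\epsilon}$, from the bound $\frac87(1-\sigma)$, applied also to the $\chi_D$-twist. Your quantity $\frac34\cdot\frac{15}{14}$ exceeds the convexity exponent.
\end{itemize}
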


%\begin{remark}
 %   Although the main theorems of our work are stated for cudpidal modular forms on $\SL_2(\BZ)$ only, all the results and proofs work exactly the same for cuspidal modular forms   of congruence groups with real coefficients  without CM. We choose to work on $\SL_2(\BZ)$ to simplify the notations.
%\end{remark}
\subsection{Proof Sketch }
The proofs of our theorems follow a framework that can be viewed as an $\ell$-adic analogue of the methods for Artin representations developed in \cite{Yang2024second} , \cite{Yang2024first} and \cite{Yang2026higher}.

The starting point is Perron's formula, which translates the estimation of the sum $\sum\limits_{n\leq x}\lambda^l_{\Sym^d f}(n)$ into the problem of evaluating a contour integral of the associated Dirichlet series $\sum\limits_{n=1}^\infty\frac{\lambda^l_{\Sym^d f}(n)}{n^s}$. This Dirichlet series is intimately related via its Euler product to the $L$-function $L\left(\left(\Sym^d f\right)^{\otimes l},s\right)$, which corresponds to  the $\ell$-adic representation $\Sym^d(\rho_f)^{\otimes l}$ (see Proposition \ref{proposition:dirichletseriescomp}).

A key insight is the decomposition of this high-dimensional tensor representation into a direct sum of irreducible constituents, which are shown to be of the form $\Sym^i \rho_f$. Consequently, the $L$-function factors into a product of symmetric power $L$-functions:
$$L\left(\left(\Sym^d f\right)^{\otimes l},s\right)=\prod_{i=0}^{ld}L\left(\Sym^i f,s\right)^{K_{i,d,l}}$$
The automorphy and entirety of these symmetric power $L$-functions, established by Newton and Thorne \cite{newton2021symmetric1},\cite{newton2021symmetric2}, are crucial here. The final asymptotic formulas are then derived by combining this factorization with established subconvexity bounds for the involved $L$-functions.

For sums over values of a binary quadratic form $Q$, the approach is similar but involves additional ingredients. The generating Dirichlet series becomes $\sum\limits_{n=1}^\infty\frac{\lambda^l_{\Sym^d f}(n)r(n,Q)}{n^s}$, where $r(n,Q)$ counts representations of $n$ by $Q$. Let $F=\BQ(\sqrt{D})$ be the corresponding imaginary quadratic field. When the class number $h(D)=1$, the series $\sum\limits_{n=1}^\infty \frac{r(n,Q)}{n^s}$ is essentially the Dedekind zeta function $\zeta_F(s)$.  In general, it can be expressed as a finite sum of Hecke $L$-functions associated to characters of the class group of $F$. Each such $L$-function corresponds to a modular form $f_\chi$
 of weight $1$. Thus, the analysis reduces to studying Rankin-Selberg type $L$-functions of the form $L\left(\left(\Sym^d f\right)^{\otimes l}\otimes\rho_{f_\chi},s\right)$ which again admit a decomposition into products of simpler $L$-functions. The theorem is proved by applying subconvexity estimates to these components.

\begin{remark}
    While this paper has been submitted and review, we noticed that recently Venkatasubbareddy submitted a paper\cite{arXiv:2601.17079} working on a similar problem and obtained similar results as our Theorem \ref{theorem:main}. But our work is different from \cite{arXiv:2601.17079} in the following aspects. First we use a representation  point of view to deal with the Dirichlet series, this allows us to connect our work with combination. Such ideas are similar with our work \cite{Yang2024first} and \cite{Yang2026higher}. They can be used to deal with more complicated cases and play a key role in the following work \cite{Yang2026second}. Next we also consider average results over binary quadratic forms, which are not considered in \cite{arXiv:2601.17079}. Finally, the estimation for the error terms in the two works are different.

\end{remark}

\section{$L$-functions associated to Weyl modules of modular forms}
\subsection{$L$-functions of compatible systems of $\ell$-adic representations}
Fix an integer $n\geq1$. For each prime $\ell$, let $\rho_\ell:G_\BQ\to\GL(V_\ell)\simeq\GL_n(\bar{\BQ}_\ell)$ be an $\ell$-adic Galois representation. 
%A system of $\ell$-adic representations $\{\rho_\ell\}_\ell$ is called compatible if
%\begin{itemize}
%    \item[(1)] There is a finite set $S$ of primes so that $\rho_\ell$ is unramified at all $p\notin S\cup\{\ell\}$
%    \item[(2)] For each prime $p$, the polynomial $\det(1-T\rho_\ell(\Frob_p)|V_\ell^{I_\ell})\in\BQ_\ell[T]$ lies in $\BQ[T]$ and does not depends on $\ell\neq p$, where $\Frob_p$ is a Frobenius element at $p$ and $I_\ell$ is the inertial group at $\ell$.
%\end{itemize}
If $\rho=\{\rho_\ell\}$ is a compatible system of $\ell-$adic representations, then we can associate an $L$-function $L(\rho,s)$ to it.
%then define the local $L$-factor at $p$ as
%\[L_p(\rho,T)=\frac{1}{\det(1-T\rho_\ell(\Frob_p)|V_\ell^{I_\ell})}\]
%for any $\ell\neq p$  and define the $L$-function as
%\[L(\rho,s)=\prod_pL_p(\rho,p^{-s}).\]

Let $\rho_1,\rho_2$ be two compatible systems. Then 
\begin{equation}
    L(\rho_1\oplus \rho_2,s)=L(\rho_1,s)L(\rho_2,s).
\end{equation}

One of the most important examples of $\ell$-adic Galois representations arises from modular forms.
Let $f$ be a cuspidal modular form of weight $k$ for $\SL_2(\BZ)$. By the work of Deligne, we can associate  a two-dimensional compatible system of $\ell-$adic representations $\rho_f=\{\rho_{f,\ell}\}$ to $f$. 
Let $f(z)=\sum\limits_{n=1}^\infty \lambda(n)n^{\frac{k-1}{2}}q^n$ be the normalized $q$-expansion of $f$. For any prime number $p$, let $\alpha_f(p),\beta_f(p)$ denote the normalized Hecke eigenvalues of $f$ at $p$, satisfying
\[\alpha_f(p)+\beta_f(p)=\lambda_f(p),\ \alpha_f(p)\beta_f(p)=1.\]
Define the automorphic $L$-function of $f$  as
\[L(f,s)=\sum_{n=1}^\infty \frac{\lambda_f(n)}{n^s}=\prod_p (1-\lambda_f(p)p^{-s}+p^{-2s})^{-1}.\]
Deligne's work shows that  $\alpha_f(p)$ and $\beta_f(p)$ are also the eigenvalues of $\rho_\ell(\Frob_p)$ on $V_\ell^{I_\ell}$.
Thus the two types of $L$-functions coincide:
\[L(f,s)=L(\rho_f,s).\]

We can also consider the symmetric power of a modular form. For any $d\geq 1$,  let $\Sym^{d}\rho_f$ denote the symmetric power representation of $\rho_f$. The modularity of $\Sym^{d}\rho_f$ is obtained in \cite{newton2021symmetric1} and \cite{newton2021symmetric2}, and we have
\[L\left(\Sym^d f,s\right)=L\left(\Sym^d\rho_f,s\right),\]
where  $\Sym^d f$ is the symmetric power lifting of $f$ to $\GL_{d+1}$. 
Thus the $L$-function $L(\Sym^d\rho_f,s)$ is entire on the whole complex plane. 
%There are several common ways to obtain new compatible systems of $\ell$-adic representations from  known ones. In this paper, we are interested in the direct sum, tensor product and symmetric power representations. The formula for direct sum representation is simple. 
%As for other constructions, we will consider a simple case arising from modular forms.

For $i=1,2$, let $\rho_i=\{\rho_{i,\ell}\}$ be compatible systems of $\ell-$adic representations. Consider the Dirichlet series 
\[D\left(\rho_1\otimes\rho_2,s\right):=\sum_{n=1}^\infty \frac{\lambda_{\rho_1}(n)\lambda_{\rho_2}(n)}{n^s},\ D\left(\Sym^d\rho_i,s\right):=\sum_{n=1}^{\infty}\frac{\lambda_{\rho_i}(n^d)}{n^s}.\]
\begin{thm}\label{theorem:l-functions}
    We have the following relations 
    \begin{equation}
        L\left(\rho_1\otimes\rho_2,s\right)=D\left(\rho_1\otimes\rho_2,s\right)U_1(s),
    \end{equation}
    \begin{equation}
        L\left(\Sym^d\rho_i,s\right)=D\left(\Sym^d\rho_i,s\right)U_2(s),
    \end{equation}
    where $U_1$ and $U_2$ are Euler products absolutely convergent for $\Re(s)>1/2$ and uniformly convergent in the region $\Re(s)>1/2+\epsilon$ for any $\epsilon>0$.
\end{thm}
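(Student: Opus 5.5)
We will compare the two Euler products prime by prime. Both sides are Euler products; the $L$-functions by definition, and the Dirichlet series because the coefficients $\lambda_{\rho_i}(n)$ (the Dirichlet coefficients of $L(\rho_i,s)$) are multiplicative, so $n\mapsto\lambda_{\rho_1}(n)\lambda_{\rho_2}(n)$ and $n\mapsto\lambda_{\rho_i}(n^d)$ are multiplicative as well. We may therefore set
\[
U_1(s)=\prod_p \frac{L_p(\rho_1\otimes\rho_2,s)}{D_p(\rho_1\otimes\rho_2,s)},\qquad U_2(s)=\prod_p\frac{L_p(\Sym^d\rho_i,s)}{D_p(\Sym^d\rho_i,s)},
\]
and it suffices to show that each local factor equals $1+O(p^{-2\sigma+\epsilon})$ uniformly, with the implied constant polynomial in the dimensions. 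Here $\sigma=\Re(s)$. Finitely many primes (ramified ones, and $p$ in the bad set $S$) contribute holomorphic nonvanishing factors for $\Re(s)>0$ and can be absorbed. We use throughout the standing hypotheses of the cases at hand: for $\rho_f$ and its symmetric and tensor powers, Deligne's bound gives Frobenius eigenvalues of absolute value $1$ at unramified $p$. Consequently every local coefficient is bounded by a power of $m+1$ at $p^m$, namely $|\lambda_\rho(p^m)|\le \binom{m+n-1}{n-1}$.

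\textbf{First-order comparison.} Let $\{\alpha_j\}$ and $\{\beta_k\}$ be the Satake/Frobenius eigenvalues of $\rho_1$ and $\rho_2$ at an unramified $p$.
\begin{itemize}
\item For the tensor product, $L_p(\rho_1\otimes\rho_2,s)=\prod_{j,k}(1-\alpha_j\beta_kp^{-s})^{-1}$, whose $p^{-s}$ coefficient is $\sum_{j,k}\alpha_j\beta_k=\lambda_{\rho_1}(p)\lambda_{\rho_2}(p)$. This is exactly the $p^{-s}$ coefficient of $D_p$.
\item For the symmetric power, the $p^{-s}$ coefficient of $L_p(\Sym^d\rho_i,s)$ is $\operatorname{tr}\Sym^d\rho_i(\Frob_p)=h_d(\alpha_1,\dots,\alpha_n)$, the complete homogeneous symmetric polynomial. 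The $p^{-s}$ coefficient of $D_p(\Sym^d\rho_i,s)$ is $\lambda_{\rho_i}(p^d)$, which is also $h_d(\alpha)$, since $\sum_m \lambda(p^m)T^m=\prod_j(1-\alpha_jT)^{-1}$.
\end{itemize}
So in both cases the quotient $L_p/D_p$ is a power series in $X=p^{-s}$ with constant term $1$ and vanishing linear term.

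\textbf{Bounding the remaining terms.} Write $L_p/D_p=1+\sum_{m\ge2}c_m(p)X^m$. We bound $c_m(p)$ by a polynomial in $m$, independent of $p$. To see this, note that the inverse $D_p^{-1}$ has coefficients obtained from the bounded-growth coefficients of $D_p$. For $\rho_1\otimes\rho_2$, $D_p$ is a rational function in $X$ whose denominator is a product of factors $(1-\alpha\beta X)$ with unimodular roots; this is the standard rationality of $\sum\lambda_1(p^m)\lambda_2(p^m)X^m$, via the Hadamard product of rational functions. For $\Sym^d$, $D_p(X)=\sum_m h_{dm}(\alpha)X^m$ is rational with poles at $X=\alpha_j^{-d}$, obtained by extracting every $d$-th coefficient. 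In both cases $L_p/D_p$ is a rational function in $X$ whose numerator and denominator have bounded degree and coefficients bounded in terms of $n$ and $d$ only. Its poles lie on $|X|=1$, so $|c_m(p)|\ll_{n,d} m^{C}$. Hence, for $\sigma\ge 1/2+\epsilon$,
\[
\Big|\frac{L_p}{D_p}-1\Big|\le \sum_{m\ge2}m^Cp^{-m\sigma}\ll p^{-2\sigma}\ll p^{-1-2\epsilon}.
\]
Summing over $p$ gives absolute convergence for $\Re(s)>1/2$, and uniform convergence on $\Re(s)\ge 1/2+\epsilon$ by the Weierstrass M-test.

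\textbf{Main obstacle.} The delicate point is the uniform bound on $c_m(p)$, i.e.\ controlling the inverse $D_p^{-1}$. A naive expansion of $1/D_p$ could have coefficients growing exponentially if $D_p$ had zeros inside the unit disc. The rationality argument sidesteps this only if we know where the zeros of the numerator of $D_p$ lie. The first thing to try is to avoid inverting $D_p$ altogether: write $D_p=L_p\cdot N_p(X)$ with $N_p$ an explicit polynomial of bounded degree, for instance $N_p=L_p^{-1}D_p$, which is a polynomial by the rational expressions above. This gives $N_p=1+O(X^2)$ with bounded coefficients. It then suffices to show $\prod_p N_p(p^{-s})$ converges absolutely and is nonzero for $\Re(s)>1/2$. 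If nonvanishing near $\Re(s)=1/2$ proves problematic, we would state the theorem with $U$ defined via $N_p^{-1}$ only for $\Re(s)>1/2+\epsilon$, where $|N_p-1|<1/2$ for all large $p$; this is sufficient for the applications.
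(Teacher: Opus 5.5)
Your strategy of matching the $p^{-s}$ coefficients at unramified $p$ and then bounding the rest of the local ratio by $O(p^{-2\sigma})$ is the standard argument the paper defers to. The step where you control $L_p/D_p$, however, fails. You assert that its poles lie on $|X|=1$. In fact they are the zeros of $N_p=D_p/L_p$, i.e.\ of the numerator of $D_p$, not the points $(\alpha_j\beta_k)^{-1}$, and these zeros can lie well inside the unit disc. For a two-dimensional $\rho$ with Frobenius eigenvalues $\alpha,\beta$, $\alpha\beta=1$, one finds
\[
D_p(\Sym^d\rho,X)=\frac{1+\lambda_\rho(p^{d-2})X}{(1-\alpha^dX)(1-\beta^dX)},\qquad N_p(X)=\bigl(1+\lambda_\rho(p^{d-2})X\bigr)\prod_{j=1}^{d-1}\bigl(1-\alpha^{d-2j}X\bigr),
\]
and $|\lambda_\rho(p^{d-2})|$ can be as large as $d-1$.

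Here is a concrete case. Take $\rho=\mathbf{1}\oplus\mathbf{1}$, so $\lambda_\rho(n)$ is the number of divisors of $n$ and $L(\Sym^4\rho,s)=\zeta(s)^5$, and take $d=4$. The factor at $p=2$ is $(1+3X)/(1-X)^2$, which vanishes at $2^{-s_0}=-1/3$, where $\Re(s_0)=\log 3/\log 2>1$. By absolute convergence $D(\Sym^4\rho,s_0)=0$, while $\zeta(s_0)^5\neq 0$, so $U_2=L/D$ has a pole there. The same thing happens in the paper's own setting: for $\rho_f$ and $d=3$ one has $N_p=(1+\lambda_f(p)X)(1-\alpha X)(1-\beta X)$, which vanishes inside $\Re(s)>1/2$ at $p\in\{2,3\}$ whenever $|\lambda_f(p)|>\sqrt p$.

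So nonvanishing of $N_p$ on $\Re(s)>1/2$ cannot be proved. Your fallback does not rescue it either: $|N_p-1|<1/2$ holds only for large $p$, while the offending zeros come from small primes and can sit anywhere in $\Re(s)>1/2$, even in $\Re(s)>1$. The statement as literally written, with $U_i=L/D$, is false for general compatible systems.

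What your $N_p$ device does establish is the identity with $U$ on the other side:
\[
D(\rho_1\otimes\rho_2,s)=L(\rho_1\otimes\rho_2,s)\prod_pN_p(p^{-s}),
\]
and likewise for $\Sym^d$. Here $N_p$ is a polynomial of bounded degree with bounded coefficients, and $N_p=1+O(X^2)$ at unramified $p$. At the finitely many ramified $p$, the factor $D_p\cdot L_p^{-1}$ is still holomorphic for $\Re(s)>0$, since $L_p^{-1}$ is a polynomial and $D_p$ has polynomially bounded coefficients. No inversion is needed, and absolute convergence for $\Re(s)>1/2$ and uniform convergence on $\Re(s)\ge 1/2+\epsilon$ follow exactly as you say.

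This is the form recorded in Proposition~\ref{proposition:dirichletseriescomp}, and it is all the Perron argument uses: one needs $U$ holomorphic and bounded, never $1/U$. So drop the attempt to control $L_p/D_p$ and commit to the form $D=L\,U_i$. In that form your argument is complete.
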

\begin{proof}
    The proof follows the same lines as the case  of Artin representations given in \cite{Yang2024second}  and  \cite{Yang2024first}.
\end{proof}
\subsection{The Schur functor and Weyl modules}
In this section, we introduce the properties of Weyl modules used in this paper. For further details, we refer to \cite{fulton2013representation}.
Let $d$ be a positive integer, and let  $\fS_d$ be the symmetric group.  
Let $d=\lambda_1+\lambda_2+\cdots+\lambda_t$ with $\lambda_1\geq\lambda_2\geq\cdots\geq\lambda_t\geq 0$ be a partition of $d$.  Such a partition is denoted by $\lambda=(\lambda_1,\lambda_2,\cdots,\lambda_t)$ as usual. We identify two partitions if they differ only by trailing zeros. 
%Let $\lambda,\mu$ be two partitions, then we say $\lamba>\mu$ if the first nonvanishing $\lambda_i-\mu_i$ is positive.

 Let $\lambda=(\lambda_1,\cdots,\lambda_t)$ be a partition   of $d$. We associate a Young symmetrizer $c_\lambda\in\BC\fS_d$  to $\lambda$ . Let $V$ be an $\ell$-adic representation of dimension $m$. This $c_\lambda$ acts on $V^{\otimes d}$ and its image is also an $\ell$-adic representation,  denoted by $\BS_\lambda V$ . These  representations are called the Weyl modules of $V$.
For example, if $\lambda=(d)$, then $\BS_\lambda V=\Sym^d V$ is the $d$-th  symmetric power representation, and if $\lambda=(1,1,\cdots,1)$, then $\BS_\lambda V=\Lambda^d V$ is the wedge product of $V$. 

We have the following results.
\begin{prop}\label{prop:dimension}
Let $V$ be an $\ell$-adic representation of dimension $m$.
    \begin{itemize}
        \item[(1)] If $\lambda_i\neq 0$ for some $i>m$, then $\BS_\lambda V=0$. 
        \item[(2)] $\dim \BS_\lambda V=\prod_{1\leq i<j\leq m}\frac{\lambda_i-\lambda_j+j-i}{j-i}$. In particular, $\dim \Sym^d V=\binom{m+d-1}{d}$.
        %\item[(3)] The  traces of an element $g$ in $\GL(V)$ on $\BS_\lambda V$ can be computed as
    \end{itemize}
\end{prop}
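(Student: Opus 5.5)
Both statements concern only the underlying linear algebra: $V$ is an $m$-dimensional vector space over $\bar{\BQ}_\ell$, and $\BS_\lambda V = c_\lambda\cdot V^{\otimes d}$. So I will argue purely as for $\GL(V)$-representations, following Fulton--Harris \cite{fulton2013representation}. The Galois action plays no role, since it factors through $\GL(V)$ and commutes with $c_\lambda$.

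For (1), I will use the explicit description of the Young symmetrizer, $c_\lambda = b_\lambda a_\lambda$ (or $a_\lambda b_\lambda$; the argument is symmetric). Here $a_\lambda$ symmetrizes over the row group $P_\lambda$ and $b_\lambda=\sum_{q\in Q_\lambda}\mathrm{sgn}(q)q$ antisymmetrizes over the column group $Q_\lambda$. If $\lambda_i\neq 0$ for some $i>m$, then the first column of the Young diagram has length $t\geq i>m$.

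Write $b_\lambda$ as a product over columns of antisymmetrizers. The first-column factor acts on the $t$ tensor slots indexed by that column, and its image lies in a copy of $\Lambda^t V\otimes V^{\otimes(d-t)}$. Since $\Lambda^t V=0$ when $t>m$, the antisymmetrizer of the first column kills $V^{\otimes d}$. The column antisymmetrizers commute (they act on disjoint slots), so $b_\lambda$ kills $V^{\otimes d}$. If $c_\lambda=b_\lambda a_\lambda$, this immediately gives $c_\lambda V^{\otimes d}\subset b_\lambda V^{\otimes d}=0$. If the convention is $c_\lambda=a_\lambda b_\lambda$, the image is $a_\lambda b_\lambda V^{\otimes d}=0$ as well. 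Either way $\BS_\lambda V=0$.

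For (2), I will compute characters. The character of $\BS_\lambda V$ at a diagonal $g=\mathrm{diag}(x_1,\dots,x_m)$ is the Schur polynomial $s_\lambda(x_1,\dots,x_m)$. This follows from Schur--Weyl duality together with the Frobenius formula: $V^{\otimes d}\cong\bigoplus_\lambda \BS_\lambda V\otimes V_\lambda$ as $\GL(V)\times\fS_d$-modules, and comparing power-sum characters gives $s_\lambda$. The dimension is then $s_\lambda(1,\dots,1)$.

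I will evaluate $s_\lambda(1,\dots,1)$ via the bialternant formula $s_\lambda=\det(x_j^{\lambda_i+m-i})/\det(x_j^{m-i})$, specialized at $x_j=q^{j-1}$. Both determinants become Vandermonde determinants in the quantities $q^{\lambda_i+m-i}$ and $q^{m-i}$ respectively. Letting $q\to1$, the ratio tends to
\[
\prod_{i<j}\frac{(\lambda_i+m-i)-(\lambda_j+m-j)}{(m-i)-(m-j)}=\prod_{i<j}\frac{\lambda_i-\lambda_j+j-i}{j-i}.
\]
The main point requiring care is this limit. Each pairwise factor $\frac{q^{a}-q^{b}}{q^{a'}-q^{b'}}$ tends to $\frac{a-b}{a'-b'}$, and the denominators are nonzero because the exponents $m-i$ are distinct.

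For the special case $\lambda=(d)$, the product over pairs with $i=1$ gives $\prod_{j=2}^m\frac{d+j-1}{j-1}=\binom{m+d-1}{d}$. The pairs with $i\geq2$ have $\lambda_i=\lambda_j=0$ and contribute factor $1$. Alternatively, this count is directly the number of monomials of degree $d$ in $m$ variables.
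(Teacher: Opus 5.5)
Your proof is correct and is essentially the argument the paper relies on: the paper gives no proof of its own and defers to Fulton--Harris, where (1) and (2) are obtained the same way (the vanishing column antisymmetrizer, or equivalently the vanishing Schur polynomial, and the character $s_\lambda$ from Schur--Weyl duality with the Frobenius formula, evaluated at $(1,\dots,1)$ by principal specialization of the bialternant). One thing worth making explicit is that (2), like your bialternant step, presupposes $\lambda_{m+1}=0$; otherwise the right-hand side is a positive number while $\BS_\lambda V=0$ by (1).
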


A fundamental problem is  describing the decomposition of tensor products
 of Weyl modules.  Let $\lambda$ be a partition of $d_1$, and $\mu$ be a partition of $d_2$, then
\begin{equation}
    \BS_\lambda V\otimes \BS_\mu V\cong\bigoplus_{\nu}N_{\lambda\mu\nu}\BS_\nu V.
\end{equation}
Here $\nu$ runs over all partitions of $d_1+d_2$, and $N_{\lambda\mu\nu}$ are non-negative integers determined by the Littlewood-Richardson rule. This  rule states that  $N_{\lambda\mu\nu}$ is the number of  ways to extend the Young diagram of $\lambda $ to that of $\nu$ via a strict $\mu$-extension.

In particular,   the tensor power $V^{\otimes d}$ decomposes as
\begin{equation}\label{equation:tensordec}
    V^{\otimes d}\cong\bigoplus_\lambda \BS_\lambda V^{\oplus m_\lambda}.
\end{equation}
Here $\lambda$ runs over all partitions of $d$ and $m_\lambda$ is the dimension of the irreducible representation of $\fS_d$ obtained by right multiplication of $c_\lambda$ on $\BC\fS_d$.
This $m_\lambda$ can be computed via
\begin{equation}
   m_\lambda=\frac{d!}{l_1!\cdots l_k!}\prod_{i<j}(l_i-l_j), 
\end{equation}
where $k$ is the number of rows of the Young diagram of $\lambda$ and $l_i=\lambda_i+k-i$.

Another special case is the tensor product of symmetric power representations. Assume  $i\geq j\geq 0$, then 
\begin{equation}\label{equation:symtensor}
    \Sym^i V\otimes \Sym^j V\cong\bigoplus_{t=0}^j\BS_{(i+t,j-t)} V.
\end{equation}
In general, we have the isomorphism
\begin{equation}\label{equation:symtendec}
  \Sym^{\lambda_1} V\otimes\Sym^{\lambda_2}V\otimes\cdots\otimes\Sym^{\lambda_k}V\cong\bigoplus_\mu (\BS_\mu V)^{\oplus K_{\mu,\lambda}}. 
\end{equation}
Here $\mu$ runs over all  partitions of $d=\lambda_1+\lambda_2+\cdots+\lambda_k$, and $K_{\mu,\lambda}$ is the Kostka number. There are several ways to define the Kostka number. For instance, it is the number of semistandard tableaux of shape $\mu$ and type $\lambda$. This means the number of  ways to fill the boxes of the Young diagram of $\mu$ with $\lambda_1$ copies of $1$, $\lambda_2$ copies of $2$, ..., $\lambda_k$ copies of $k$, such that the entries in  each row are weakly increasing and entries in each column are strictly increasing. In general, there is no explicit formula for $K_{\mu,\lambda}$ for arbitrary $\mu$ and $\lambda$.
%We also have a formula for the Weyl module of direct sums of representations. Here we only recall the formula for symmetric power representations.
%\[\Sym^d(V\oplus W)=\oplus_{a+b=d}(\Sym^a V\otimes\Sym^b W).\]

\subsection{Weyl modules associated to modular forms}\label{section:Weylmod}
Now we specialize to the case  of modular forms.
Let $f$ be a cuspidal eigenform of weight $k$ on $\SL_2(\BZ)$ and let $\rho_f$ be its associated $\ell$-adic Galois representation. 
\begin{prop}\label{prop:decomposition}
\

    \begin{itemize}
        \item[(1)] If the  Young diagram of $\lambda$ has more than two rows, then $\BS_\lambda \rho_f=0$. Moreover $\dim \Sym^d \rho_f=d+1$.
        \item[(2)] If $\lambda=(\lambda_1,\lambda_2)$ is a partition of $d$, then $\BS_{\lambda} \rho_f\cong \Sym^{\lambda_1-\lambda_2}\rho_f$.
        \item[(3)] $\rho_f^{\otimes l}\cong\bigoplus\limits_{i=0}^{\lfloor l/2\rfloor} \Sym^{l-2i} \rho_f^{\oplus c_{l,i}}$. Here $c_{l,i}=\binom{l}{i}-\binom{l}{i-1}$.
        \item[(4)] $\Sym^i\rho_f\otimes \Sym^j \rho_f\cong\Sym^{|i-j|} \rho_f\oplus \Sym^{|i-j|+2}\rho_f\oplus\cdots\oplus\Sym^{i+j}\rho_f$.
           
        \item[(5)] $(\Sym^d \rho_f)^{\otimes l}\cong\mathop{\oplus}\limits_{i=0}^{dl} K_{i,d,l}\Sym^i \rho_f$, where  $K_{i,d,l}=0$ if $i$ and $dl$ have different parity, and  $K_{i,d,l}=K_{\mu,\lambda}$ (the Kostka number)  otherwise. Here $\mu=(\frac{dl+i}{2},\frac{dl-i}{2})$ and $\lambda=(d,d,\cdots,d)$ are partitions of $dl$.
        \end{itemize} 
\end{prop}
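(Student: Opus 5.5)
The plan is to reduce everything to the general Weyl-module facts already recalled, specialized to $m=\dim\rho_f=2$, together with one key input: $\det\rho_f$ is trivial. We normalize so that the Frobenius eigenvalues are $\alpha_f(p),\beta_f(p)$ with $\alpha_f(p)\beta_f(p)=1$, i.e.\ we use the Tate-twisted system, so that $L(\rho_f,s)=L(f,s)$ as stated. Hence $\Lambda^2\rho_f$ is the trivial character. This triviality is the only point needing care, since otherwise all identities hold only up to powers of the determinant. It can be checked on Frobenius elements via Chebotarev density, and it follows from $\alpha\beta=1$.

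For (1), Proposition \ref{prop:dimension}(1) with $m=2$ kills all $\BS_\lambda\rho_f$ with three or more rows, and Proposition \ref{prop:dimension}(2) gives $\dim\Sym^d\rho_f=\binom{d+1}{d}=d+1$. For (2), the standard identity is
\[
\BS_{(\lambda_1,\lambda_2)}V\cong \Sym^{\lambda_1-\lambda_2}V\otimes(\Lambda^2V)^{\otimes\lambda_2}
\]
for $\dim V=2$. It can be proved by comparing characters: both sides have character $s_{(\lambda_1,\lambda_2)}(x,y)=(xy)^{\lambda_2}h_{\lambda_1-\lambda_2}(x,y)$, and a semisimple $\GL_2$-representation is determined by its character. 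Combined with $\Lambda^2\rho_f$ trivial, this gives (2). All of this is functorial in $V$, so it holds for the Galois representation composed with the algebraic representation.

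For (4), apply \eqref{equation:symtensor} with $i\ge j$. The summands are $\BS_{(i+t,j-t)}$, which by (2) are $\Sym^{i-j+2t}\rho_f$ for $t=0,\dots,j$; this is exactly the Clebsch--Gordan list. For (3), decompose with \eqref{equation:tensordec}. Only two-row partitions $(l-i,i)$ survive, and by (2) they contribute $\Sym^{l-2i}\rho_f$. The multiplicity $m_\lambda$ for $k=2$, with $l_1=l-i+1$ and $l_2=i$, is
\[
\frac{l!\,(l-2i+1)}{(l-i+1)!\,i!}=\binom{l}{i}-\binom{l}{i-1},
\]
which is a routine check. Alternatively, (3) follows by induction on $l$ using (4) with $j=1$ and Pascal-type recursion.

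For (5), apply \eqref{equation:symtendec} with $\lambda=(d,\dots,d)$ ($l$ copies). Only two-row $\mu=(\mu_1,\mu_2)$ with $\mu_1+\mu_2=dl$ contribute, and by (2) each contributes $\Sym^{\mu_1-\mu_2}\rho_f$. Setting $i=\mu_1-\mu_2$ forces $i\equiv dl \pmod 2$, with $\mu=((dl+i)/2,(dl-i)/2)$ and multiplicity $K_{\mu,\lambda}$. Distinct $\mu$ give distinct $i$, so there is no collecting of terms. The main (mild) obstacle is thus step (2): justifying the determinant twist and its triviality for the Galois representation. The remainder is bookkeeping.
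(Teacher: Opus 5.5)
Your proposal is correct. For (1), (3), (4) and (5) it is the paper's argument: once (2) is available, everything is read off from Proposition \ref{prop:dimension} and \eqref{equation:tensordec}, \eqref{equation:symtensor}, \eqref{equation:symtendec}. You also check that $m_{(l-i,i)}=\binom{l}{i}-\binom{l}{i-1}$, which the paper leaves implicit.

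The difference is in (2).
\begin{itemize}
\item The paper lists the Frobenius eigenvalues $(\alpha_f\beta_f)^{\lambda_2}\alpha_f^{t_1}\beta_f^{t_2}$ on $\BS_\lambda\rho_f$ and compares them. In effect it evaluates $s_{(\lambda_1,\lambda_2)}(x,y)=(xy)^{\lambda_2}h_{\lambda_1-\lambda_2}(x,y)$ at each $\Frob_p$. This is quick and already gives the equality of Euler factors used later, but turning it into an isomorphism tacitly uses Chebotarev and Brauer--Nesbitt.
\item You use the same identity universally, as an isomorphism of $\GL_2$-modules $\BS_{(\lambda_1,\lambda_2)}V\cong\Sym^{\lambda_1-\lambda_2}V\otimes(\Lambda^2V)^{\otimes\lambda_2}$, pulled back along $\rho_f$. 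This gives a genuine isomorphism and isolates the only arithmetic input in the character $\det\rho_f$.
\end{itemize}

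Keeping that twist explicit is the more robust choice. The normalization $\alpha_f\beta_f=1$ that you and the paper both invoke is not realized by any twist of Deligne's representation. On $\SL_2(\BZ)$ the weight $k$ is even, so $\det\rho_{f,\ell}=\chi_\ell^{k-1}$ equals $-1$ on complex conjugation $c$. Twisting by a character $\psi$ multiplies the determinant by $\psi^2$, and $\psi(c)^2=1$, so the value at $c$ stays $-1$.

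Your formula yields the honest statement
\[
\BS_{(\lambda_1,\lambda_2)}\rho_{f,\ell}\cong\Sym^{\lambda_1-\lambda_2}\rho_{f,\ell}\otimes\chi_\ell^{(k-1)\lambda_2}.
\]
This is an integral Tate twist: it only shifts $s$ and disappears after the analytic normalization of the $L$-functions. The same holds in (3)--(5), where the parity conditions make all twists integral. That is all the subsequent $L$-function factorizations need.
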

\begin{proof}
    The statements in (1) follow directly from Proposition \ref{prop:dimension}, since  $\rho_f$ has dimension $2$.
    
    Let $\alpha_f,\beta_f$ be the eigenvalues of $\Frob_p$ acting on $\rho_f$, with eigenvector $v_1,v_2$. 
    For any $j\geq 1$, $\Sym^j\rho_f$ has a basis $v_{i_1}\otimes v_{i_2}\otimes\cdots\otimes v_{i_j}$ with $1\leq i_1\leq i_2\leq\cdots\leq i_j\leq2$. The eigenvalues of $\Frob_p$ acting on $\Sym^j\rho_f$ are  $\alpha_f^{t_1}\beta_f^{t_2}$ for $t_1+t_2=j$.
   For $\BS_\lambda\rho_f$, a basis is given by  the image of   $w_1\otimes w_2\otimes\cdots\otimes w_d\in \rho_f^{\otimes d}$, where
  $w_i=v_1$ or $v_2$ and $w_i\neq w_{d-i}$ for any $1\leq i\leq \lambda_2$.
 The eigenvalue of $\Frob_p$  acting on these elements are  $(\alpha_f\beta_f)^{\lambda_2}\alpha_f^{t_1}\beta_f^{t_2}=\alpha_f^{t_1}\beta_f^{t_2}$, where $t_1+t_2=\lambda_1-\lambda_2$. The statement in (2) follows by comparing  Frobenius eigenvalues.

 The satements in (3), (4) and (5) follow from the decomposition formulas (\ref{equation:tensordec}) , (\ref{equation:symtensor}) and (\ref{equation:symtendec})  respectively, together with (2).
\end{proof}

From the decomposition of the tensor product of  representations, we derive the factorization of $L$-functions.
\begin{prop}
           \[L\left(f^{\otimes l},s\right)=\prod_{i=0}^{\lfloor l/2\rfloor}L\left(\Sym^{l-2i}f,s\right)^{c_{l,i}},\ \ L\left(\left(\Sym^d f\right)^{\otimes l},s\right)=\prod_{i=0}^{ld}L\left(\Sym^i f,s\right)^{K_{i,d,l}}.\]
\end{prop}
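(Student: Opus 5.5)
This factorization is a direct consequence of the decompositions in Proposition \ref{prop:decomposition} combined with the additivity of $L$-functions under direct sums. First, we interpret $L\left(f^{\otimes l},s\right)$ as $L\left(\rho_f^{\otimes l},s\right)$. Likewise, $L\left(\left(\Sym^d f\right)^{\otimes l},s\right)$ is $L\left((\Sym^d\rho_f)^{\otimes l},s\right)$, the $L$-function of the compatible system obtained by tensoring. Tensor products and symmetric powers of compatible systems are again compatible, since Frobenius eigenvalues at unramified $p$ are the corresponding products or monomials of $\alpha_f(p),\beta_f(p)$. Hence these $L$-functions are well defined. Moreover, by the theorem of Newton--Thorne recalled above, each factor $L(\Sym^i\rho_f,s)$ coincides with the automorphic $L(\Sym^i f,s)$.

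Next, we apply part (3) of Proposition \ref{prop:decomposition}, which gives $\rho_f^{\otimes l}\cong\bigoplus_{i=0}^{\lfloor l/2\rfloor}(\Sym^{l-2i}\rho_f)^{\oplus c_{l,i}}$. Iterating the identity $L(\rho_1\oplus\rho_2,s)=L(\rho_1,s)L(\rho_2,s)$ over the finitely many summands yields the first product. In the same way, part (5) gives $(\Sym^d\rho_f)^{\otimes l}\cong\bigoplus_{i=0}^{dl}K_{i,d,l}\Sym^i\rho_f$, and multiplicativity yields the second product. Terms with $K_{i,d,l}=0$ contribute the factor $1$.

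The one point requiring care is that the isomorphisms in Proposition \ref{prop:decomposition} must hold as compatible systems of Galois representations, including the local factors, and not merely as abstract representations of $\GL_2$. I would handle this as follows. The decompositions (\ref{equation:tensordec}) and (\ref{equation:symtendec}) are functorial isomorphisms of $\GL(V)$-representations, so they are compatible with composing with $\rho_{f,\ell}:G_\BQ\to\GL(V_\ell)$. They therefore give $G_\BQ$-isomorphisms for each $\ell$, and the resulting decompositions are independent of $\ell$.

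For unramified $p$, the local factor depends only on the multiset of Frobenius eigenvalues. This equality was effectively verified in the proof of (2), using $\alpha_f\beta_f=1$. Since $f$ has level $1$, every $p$ is unramified for $\rho_{f,\ell}$ with $\ell\neq p$, so there is no issue with inertia invariants. Consequently the Euler factors match prime by prime, which completes the argument.
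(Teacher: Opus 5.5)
Your proposal is correct and is the same argument the paper gives in one line: combine parts (3) and (5) of Proposition \ref{prop:decomposition} with the multiplicativity $L(\rho_1\oplus\rho_2,s)=L(\rho_1,s)L(\rho_2,s)$. Your prime-by-prime check of Euler factors is the right level at which to make the identification: every $p\neq\ell$ is unramified at level one, and $\alpha_f(p)\beta_f(p)=1$ absorbs the $(\det\rho_f)^{\lambda_2}$ twist hidden in part (2). That check only makes explicit what the paper leaves implicit.
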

The first identity was also derived in \cite{xu2022general} by comparing  Hecke eigenvalues on these representations.

\subsection{The calculation of Kostka numbers}\label{section:Kostka number}
In this section, we provide an explicit formula for the Kostka number $K_{i,d,l}$ appearing in the decomposition of $\left(\Sym^d V\right)^{\otimes l}$ in Proposition \ref{prop:decomposition}.

For any $0\leq i\leq dl$ such that $i$ and $dl$ have the same parity,
let $\lambda=(d,d,\cdots,d),\mu=(\frac{dl+i}{2},\frac{dl-i}{2})$ be two partitions of $dl$. The Kostka number $K_{\mu,\lambda}$ is the number of ways to fill the Young tableaux of shape $\mu$ with the numbers $1,2,\cdots,l$, where each number appears exactly $d$ times, such that entries in each row are weakly increasing and entries in each column are strictly increasing.

We now provide an alternative explicit description of these Kostka numbers. Let $K_{i,d,l}=K_{\mu,\lambda}$ (as defined above).  By Proposition \ref{prop:decomposition}, $K_{i,d,l}$ is the multiplicity of $\Sym^i V$ in $\left(\Sym^d V\right)^{\otimes l}$ for a two dimensional representation $V$. We extend the definition of $K_{i,d,l}$  to all $i\in\BZ$ by setting $K_{i,d,l}=0$ for $i\notin [0,dl]$, or when $i$ and $dl$ have different parity.  From Proposition \ref{prop:decomposition} (4), we  derive a recursive formula for $K_{i,d,l}$:
\[K_{i,d,l}=K_{i-d,d,l-1}+K_{i-d+2,d,l-1}+\cdots+K_{i+d,d,l-1}.\]
On the other hand, consider the polynomial:
\[(1+x+x^2+\cdots+x^d)^{l}=\sum_{j=-\infty}^{+\infty}C_{j,d,l}x^j.\]
Define $A_{i,d,l}=C_{\lfloor\frac{dl-i}{2}\rfloor,d,l}-C_{\lfloor\frac{dl-i-1}{2}\rfloor,d,l}.$
\begin{prop} We have 
   $ K_{i,d,l}=A_{i,d,l}.$
 \end{prop}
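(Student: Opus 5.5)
We compare characters, using the weight-space description of $\mathfrak{sl}_2$- (or $\GL_2$-) representations. Let $V$ be two-dimensional with a diagonal element acting with eigenvalues $x^{1/2},x^{-1/2}$; more cleanly, use the torus $\mathrm{diag}(t,t^{-1})$. The character of $\Sym^d V$ is then $t^{d}+t^{d-2}+\cdots+t^{-d}=t^{-d}(1+t^2+\cdots+t^{2d})$. Hence the character of $(\Sym^d V)^{\otimes l}$ is
\[
t^{-dl}(1+y+\cdots+y^d)^l=\sum_j C_{j,d,l}\,t^{2j-dl},\qquad y=t^2 .
\]
So the multiplicity of the weight $w=dl-2j$ in $(\Sym^d V)^{\otimes l}$ is exactly $C_{j,d,l}$.

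On the other side, Proposition \ref{prop:decomposition}(5) gives $(\Sym^d V)^{\otimes l}\cong\bigoplus_i K_{i,d,l}\Sym^i V$. The character of $\Sym^i V$ contributes weight $w$ with multiplicity one exactly when $|w|\le i$ and $i\equiv w \pmod 2$. Therefore, for $w\ge 0$,
\[
\mathrm{mult}(w)=\sum_{i\ge w,\ i\equiv w\ (2)}K_{i,d,l}.
\]
Since $K_{i,d,l}=0$ unless $i\equiv dl \pmod 2$, only weights $w\equiv dl$ contribute, and for those
\[
K_{w,d,l}=\mathrm{mult}(w)-\mathrm{mult}(w+2).
\]
This is the standard fact that the multiplicity of an irreducible equals the weight multiplicity at its highest weight minus the one two steps above. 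The argument is legitimate because characters determine representations up to isomorphism, or simply because the Frobenius eigenvalues in the proof of Proposition \ref{prop:decomposition} give the same comparison of formal Laurent polynomials in $\alpha$ with $\beta=\alpha^{-1}$.

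Translating back, weight $w=i$ corresponds to $j=(dl-i)/2$, and weight $i+2$ corresponds to $j=(dl-i)/2-1$. So for $i\equiv dl$ we get $K_{i,d,l}=C_{(dl-i)/2}-C_{(dl-i)/2-1}$. In this case $\lfloor (dl-i)/2\rfloor=(dl-i)/2$ and $\lfloor (dl-i-1)/2\rfloor=(dl-i)/2-1$, which is exactly $A_{i,d,l}$.

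When $i\not\equiv dl$, write $dl-i=2m+1$. Then $\lfloor (dl-i)/2\rfloor=m=\lfloor (dl-i-1)/2\rfloor$, so $A_{i,d,l}=0=K_{i,d,l}$.

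The range $i\notin[0,dl]$ needs separate checks:
\begin{itemize}
\item For $i>dl$, both indices are negative, so both $C$'s vanish.
\item For $i<0$ with $i\equiv dl$, we need $C_{(dl-i)/2}=C_{(dl-i)/2-1}$. This follows from the palindromic symmetry $C_{j}=C_{dl-j}$ together with the identity at the positive weight $-i-2$.
\item If the convention only intends $i\ge0$, this case can be dropped.
\end{itemize}

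An alternative, purely combinatorial route is induction on $l$ using the recursion $K_{i,d,l}=\sum_{s}K_{i-d+2s,d,l-1}$ and $C_{j,d,l}=\sum_{s=0}^d C_{j-s,d,l-1}$. The character argument is cleaner, and the only delicate point is the index and parity bookkeeping above, in particular the edge cases of negative or out-of-range $i$. I expect that bookkeeping, not any conceptual step, to be the main obstacle.
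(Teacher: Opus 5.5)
Your argument is correct for $i\ge 0$, and it takes a different route from the paper's.

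\textbf{Comparison of routes.} The paper argues by induction on $l$. It reads off from the Clebsch--Gordan rule (Proposition~\ref{prop:decomposition}(4)) a recursion $K_{i,d,l}=\sum_{s=0}^{d}K_{i-d+2s,d,l-1}$. It then shows by telescoping with $C_{j,d,l}=\sum_{s=0}^{d}C_{j-s,d,l-1}$ that $A_{i,d,l}$ obeys the same recursion, and matches the two at $l=1$.

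You instead identify the weight multiplicities of $(\Sym^d V)^{\otimes l}$ with the coefficients $C_{j,d,l}$ in one step. You then recover each irreducible multiplicity as the difference of the weight multiplicities at $i$ and $i+2$. This buys three things:
\begin{itemize}
\item it is non-inductive;
\item it explains directly why $A_{i,d,l}$ is a difference of two adjacent coefficients;
\item it avoids a boundary subtlety in the paper's route.
\end{itemize}
That subtlety is the following. With $K_{i,d,l}=0$ for $i<0$, rule (4) really gives $K_{i,d,l}=\sum_{j=|i-d|}^{i+d}K_{j,d,l-1}$ (step $2$). This differs from $\sum_{j=i-d}^{i+d}$ when $0\le i<d$: for example $K_{0,2,3}=1$, whereas $K_{-2,2,2}+K_{0,2,2}+K_{2,2,2}=2$. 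So the recursion as written is satisfied by $A$ but not by the zero-extended $K$. Closing the induction needs the antisymmetry $A_{-i-2,d,l}=-A_{i,d,l}$, which makes the extra terms cancel in pairs.

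A cosmetic point: your display shows weight $2j-dl$ with multiplicity $C_{j,d,l}$. To get weight $dl-2j$ you need the symmetry $t\mapsto t^{-1}$, or you can expand $t^{d}(1+t^{-2}+\cdots+t^{-2d})$ instead.

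\textbf{The error in the $i<0$ bullet.} Palindromic symmetry together with the identity at weight $-i-2$ does not give $C_{(dl-i)/2}=C_{(dl-i)/2-1}$. It gives exactly the antisymmetry above:
\[
A_{i,d,l}=-A_{-i-2,d,l}=-K_{-i-2,d,l},
\]
which is nonzero in general. For instance, $(1+x+x^2)^2=1+2x+3x^2+2x^3+x^4$ gives $A_{-2,2,2}=C_{3,2,2}-C_{2,2,2}=-1\neq 0=K_{-2,2,2}$.

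So under the paper's zero-extension convention the identity fails for $i\le -2$. It does hold at $i=-1$, where antisymmetry forces $A_{-1,d,l}=0$. The proposition must therefore be read for $i\ge 0$, which is all that Corollary~\ref{corollary:Kostka} uses. Delete that bullet rather than try to repair it; with that change your proof is complete.
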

\begin{proof}
We only need to show that $A_{i,d,l}$ and $K_{i,d,l}$ satisfy the same recursive formula and  boundary conditions.
   First, the coefficients $C_{i,d,l}$ satisfy the recursion:
    \[C_{i,d,l}=C_{i-d,d,l-1}+C_{i-d+1,d,l-1}+\cdots+C_{i,d,l-1}.\]
    Thus
    \begin{align*}
         A_{i,d,l}=&C_{\lfloor\frac{dl-i}{2}\rfloor,d,l}-C_{\lfloor\frac{dl-i-1}{2}\rfloor,d,l}\\
         =&\sum_{t=j-d}^jC_{t,d,l-1}-\sum_{t=j'-d}^{j'}C_{t,d,l-1}.
    \end{align*}
   % \begin{align*}
   %     A_{i,d,l}=&C_{\lfloor\frac{dl-i}{2}\rfloor,d,l}-C_{\lfloor\frac{dl-i-1}{2}\rfloor,d,l}=C_{\lfloor\frac{ld-i}{2}\rfloor-d,d,l-1}+C_{\lfloor\frac{dl-i}{2}\rfloor-d+1,d,l-1}+\cdots C_{\lfloor\frac{dl-i}{2}],d,l-1}\\
   %     &-C_{\lfloor\frac{dl-i-1}{2}\rfloor-d,d,l-1}-C_{\lfloor\frac{dl-i-1}{2}\rfloor-d+1,d,l-1}-\cdots C_{\lfloor\frac{dl-i-1}{2}\rfloor,d,l-1}.
   % \end{align*}
    where $j=\lfloor\frac{dl-i}{2}\rfloor, j'=\lfloor\frac{dl-i-1}{2}\rfloor.$
    On the other hand,
    \begin{align*}
        \sum_{t=0}^dA_{i-d+2t,d,l-1}&=\sum_{t=0}^d\left(C_{\lfloor\frac{dl-d-(i-d+2t)}{2}\rfloor,d,l-1}-C_{\lfloor\frac{dl-d-(i-d+2t)-1}{2}\rfloor,d,l-1}\right)\\
        &=\sum_{t=0}^d\left(C_{\lfloor\frac{dl-i-2t)}{2}\rfloor,d,l-1}-C_{\lfloor\frac{dl-i-2t-1}{2}\rfloor,d,l-1}\right)
    \end{align*}
    %\begin{align*}
    %    &A_{i-d,d,l-1}+A_{i-d+2,d,l-1}+\cdots+A_{i+d,d,l-1}= C_{[\frac{dl-d-i+d}{2}],d,l-1}-C_{[\frac{dl-d-i+d-1}{2}],d,l-1} \\
    %    &+C_{[\frac{dl-d-i+d-4}{2}],d,l-1}-C_{[\frac{dl-d-i+d-5}{2}],d,l-1}+\cdots+C_{[\frac{dl-d-i-d}{2}],d,l-1}-C_{[\frac{dl-d-i-d-1}{2}],d,l-1}.
    %\end{align*}
    By telescoping, this sum equals $A_{i,d,l}$.
    Thus
   \[A_{i,d,l}=A_{i-d,d,l-1}+A_{i-d+2,d,l-1}+\cdots+A_{i+d,d,l-1}.\]
   For the boundary condition $l=1$:
   \begin{itemize}
       \item $K_{d,d,1}=1$ and $K_{i,d,1}=0$ for $i\neq d$
       \item $A_{d,d,1}=C_{\lfloor \frac{d-d}{2}\rfloor,d,1}-C_{\lfloor \frac{d-d-1}{2}\rfloor,d,1}=C_{0,d,1}-C_{-1,d,1}=1-0=1$ and $A_{i,d,1}=0$ for $i\neq d$.
   \end{itemize}
     Thus $A_{i,d,d}$ and $K_{i,d,d}$ satisfy the same recursion  and boundary conditions, so $A_{i,d,l}=K_{i,d,l}$.
\end{proof}
From this proposition, the Kostka number $K_{i,d,l}$ can be expressed in terms of binomial coefficients. This gives the following formula.
\begin{cor}\label{corollary:Kostka}
    \begin{equation}
        K_{i,d,l}=\begin{cases}
            0\ \ \ \ \ \ \ \ \ \ \ \ \ \ \ \ \ \ \ \ \ \ \ \ \ \ \  \ \ \ \ \ \ \   \ \ \ \ \ \ \ \ \ \ \ \ \text{if $2\nmid dl-i$}\\
            \sum\limits_{j=0}^{\lfloor(dl-i)/(2d+2)\rfloor}(-1)^j\binom{l}{j}\binom
        {(dl-i)/2-j(d+1)+l-2}{l-2}
    \ \text{otherwise}
        \end{cases}.
    \end{equation}
\end{cor}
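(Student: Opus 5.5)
We derive Corollary \ref{corollary:Kostka} from the preceding proposition, $K_{i,d,l}=A_{i,d,l}$, by computing the coefficients $C_{j,d,l}$ of $(1+x+\cdots+x^d)^l$ explicitly and then taking the difference that defines $A_{i,d,l}$.

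\textbf{Parity case.} If $dl-i$ is odd, then $K_{i,d,l}=0$ by the definition in Proposition \ref{prop:decomposition}(5), so there is nothing to prove. We therefore assume $dl-i=2n$ with $n\ge 0$ an integer. Then $\lfloor (dl-i)/2\rfloor=n$ and $\lfloor (dl-i-1)/2\rfloor=n-1$, so
\[K_{i,d,l}=C_{n,d,l}-C_{n-1,d,l}.\]

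\textbf{Computing $C_{j,d,l}$.} Write
\[(1+x+\cdots+x^d)^l=(1-x^{d+1})^l(1-x)^{-l}.\]
Expanding $(1-x^{d+1})^l=\sum_j(-1)^j\binom{l}{j}x^{j(d+1)}$ and $(1-x)^{-l}=\sum_{m\ge0}\binom{m+l-1}{l-1}x^m$ gives
\[C_{n,d,l}=\sum_{j\ge0}(-1)^j\binom{l}{j}\binom{n-j(d+1)+l-1}{l-1},\]
where a binomial coefficient $\binom{N}{l-1}$ is taken to be $0$ whenever $N<l-1$, i.e.\ whenever the exponent $n-j(d+1)$ is negative.

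\textbf{Differencing.} The generating function of the differences $C_{n,d,l}-C_{n-1,d,l}$ is $(1-x)$ times the one above, namely $(1-x^{d+1})^l(1-x)^{-(l-1)}$. This holds for $l\ge2$, which is the range of the main theorems. Since $(1-x)^{-(l-1)}=\sum_{m\ge0}\binom{m+l-2}{l-2}x^m$, the same expansion yields
\[C_{n,d,l}-C_{n-1,d,l}=\sum_{j}(-1)^j\binom{l}{j}\binom{n-j(d+1)+l-2}{l-2}.\]
Equivalently, this follows termwise from Pascal's identity $\binom{N+1}{l-1}-\binom{N}{l-1}=\binom{N}{l-2}$.

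\textbf{Truncation.} The $j$-th term vanishes exactly when $n-j(d+1)<0$. So the sum runs over $0\le j\le\lfloor n/(d+1)\rfloor=\lfloor (dl-i)/(2d+2)\rfloor$, which is the stated range. Substituting $n=(dl-i)/2$ gives the formula in the corollary.

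The only delicate point is the boundary convention. We must make sure that terms with negative exponent are genuinely absent rather than being assigned nonzero values by a formal binomial polynomial. Stating the truncation on the $x$-exponents, as above, handles this cleanly.
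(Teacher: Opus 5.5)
Your proposal is correct and follows essentially the same route as the paper: reduce to $K_{i,d,l}=A_{i,d,l}=C_{n,d,l}-C_{n-1,d,l}$ with $n=(dl-i)/2$, expand $(1-x^{d+1})^l(1-x)^{-l}$, and simplify the difference. Your generating function $(1-x^{d+1})^l(1-x)^{-(l-1)}$, or equivalently Pascal's rule applied termwise, is a cleaner way of carrying out the paper's ``simplifying (via telescoping)'' step, and you make the truncation of the $j$-range and the implicit requirement $l\ge 2$ explicit.
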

\begin{proof}
Since
    \[(1+x+x^2+\cdots+x^d)^{l}=(1-x^{d+1})^{l}(1-x)^{-l},\]
    the coefficients $K_{i,d,l}=A_{i,d,l}$ can be computed as
\begin{align*}
    A_{i,d,l}=&\sum_{j=0}^{\lfloor(dl-i)/(2d+2)\rfloor}(-1)^j\binom{l}{j}\binom{\lfloor(dl-i)/2\rfloor-j(d+1)+l-1}{l-1}\\
    &-\sum_{j=0}^{\lfloor(dl-i-1)/(2d+2)\rfloor}(-1)^j\begin{pmatrix}
        l\\j
    \end{pmatrix}\begin{pmatrix}
        \lfloor(dl-i-1)/2\rfloor-j(d+1)+l-1\\l-1
    \end{pmatrix}.
\end{align*}

    If $dl$ and $i$ have different parity,  $A_{i,d,l}=0$. If $dl$ and $i$ have the same parity, $\lfloor\frac{dl-i}{2}\rfloor=\frac{dl-i}{2}$ and $\lfloor\frac{dl-i-1}{2}\rfloor=\frac{dl-i}{2}-1$. Substituting these into the expression for $A_{i,d,l}$, and simplifying (via telescoping) gives the desired formula.
    \end{proof}

\begin{example}
The decomposition of $\left(\Sym^2 \rho_f\right)^{\otimes l}$ is given by
\[\left(\Sym^2 \rho_f\right)^{\otimes l}\cong\bigoplus \left(\Sym^i \rho_f\right)^{\oplus K_{i,2,l}}\]
where $K_{0,2,l}=0,1,1,3,6,15,36,91$ for $l=1,\cdots, 8$ respectively.
Additionally, the representation $\left(\Sym^d\rho_f\right)^{\otimes2}$ decomposes as
\[\left(\Sym^d\rho_f\right)^{\otimes2}\cong\bigoplus_{i=0}^{d}\Sym^{2i}\rho_f.\]
\end{example}
%\subsection{Relations between Dirichlet series}\label{section:dirichletseries}
%Let $\rho_i=\{\rho_{i,\ell}\}$, $i=1,2$ be compatible systems of $\ell-$adic representations, and $L(\rho_i,s)=\sum_{n=1}^\infty \frac{\lambda_{\rho_i}(n)}{n^s}$ be the associated $L-$functions. In one side, we can obtain new $\ell$-adic representations from the known representations with the associated $L$-functions, and on the other hand, we can obtain new Dirichlet series from the coefficients $\lambda_{\rho_i}(n)$. Now we consider when  the two constructions coincide. In this paper,we consider the following cases.
%\begin{itemize}
%    \item[(1)] The tensor product $L$-function $L(\rho_1\otimes\rho_2,s)$ and $D(\rho_1\otimes\rho_2,s):=\sum_{n=1}^\infty \frac{\lambda_{\rho_1}(n)\lambda_{\rho_2}(n)}{n^s}$.
%    \item[(2)] The symmetric power $L$-function $L(\Sym^k \rho,s)$ and $D(\Sym^k\rho,s):=\sum_{n=1}^{\infty}\frac{\lambda_\rho(n^k)}{n^s}$.
%\end{itemize}
%\begin{thm}\label{theorem:l-functions}
%    We have relations 
%    \begin{equation}
 %       L(\rho_1\otimes\rho_2,s)=D(\rho_1\otimes\rho_2,s)U_1(s),
  %  \end{equation}
  %  \begin{equation}
  %      L(\Sym^k\rho,s)=D(\Sym^k\rho,s)U_2(s),
  %  \end{equation}
  %  where $U_1$ and $U_2$ are absolutely convergent for $\Re(s)>1/2$ and uniformly convergent in the region $\Re(s)>1/2+\epsilon$ for any $\epsilon>0$.
%\end{thm}
%\begin{proof}
%    The proof is the same as case of Artin representations given in 
%\end{proof}

\subsection{Binary quadratic forms}
Let $Q(x,y)=ax^2+bxy+cy^2$ with $a,b,c\in\BZ$ and $a>0$ be a positive definite binary quadratic form. Let $D<0$ denote its discriminant. For any $n>0$, let $r(n,Q)$ denote the number of integer solutions $(x,y)$ of the equation $Q(x,y)=n$. In the classical case $Q(x,y)=x^2+y^2$,  the number $r(n,Q)$ counts the number of representations of $n$ as a sum of two squares.

Let $F=\BQ(\sqrt{D})$ be the imaginary quadratic field associated with $Q$. Denote by $\fh$  the class group of $F$ and by $h(D)$  its class number. Let $w_D$ be the number of  roots of unity in $F$, specially,   $w_D=6$ if $D=-3$, $w_D=4$ if $D=-4$ and  $w_D=2$ otherwise. The equivalent classes of binary quadratic forms are in bijection with the ideal class group $\fh$.In this correspondence, the number $r(n,Q)$ equals  $w_D$ times the number of integral ideals of norm $n$ in the  ideal class $\fa_Q$ corresponding to $Q$. 
 Applying the orthogonality relations of group characters, we obtain the formula
\begin{equation}\label{equation:binarydecom}
    r(n,Q)=\frac{w_D}{h(D)}\sum_{\chi}\overline{\chi(\fa_Q)}\sum_{\fn}\chi(\fn),
\end{equation}
where $\chi$ runs over all  characters of $\fh$ and the inner sum runs over all integral ideals $\fn$ of $F$ with norm  $n$. 

By class field theory, each  character of $\fh$ corresponds to  a Hecke character of $F$. For such a character $\chi$, let $V(\chi)=\Ind_F^\BQ\chi$ denote the  Artin representation induced from $F$ to $\BQ$. We note the following:
\begin{itemize}
    \item If $\chi$ is  trivial, then $V(\chi)\cong1_\BQ\oplus \chi_F$, where $\chi_F$ is the quadratic Dirichlet character associated to $F$.
    \item If $\chi$ is non-trivial, then $V(\chi)$ is an irreducible two-dimensional representation, which by the Langlands program corresponds to  a CM modular form $f_\chi=\sum_{n=1}^\infty a_\chi(n)q^n$.
\end{itemize}

%Let $$L(\chi,s)=\sum_{\fn}\frac{\chi(\fn)}{N(\fn)^s}=\sum_{n=1}^\infty\frac{a_\chi(n)}{n^s}$$
%be the Hecke $L$-function associated to $\chi$.

%Fixing an isomorphism $\BC\simeq \bar{\BQ_\ell}$ for each $\ell$, all these $L$-functions can be viewed as the $L$-function of an compatible systems $\ell$-adic representations. So the results in section \ref{section:dirichletseries} can be applied to these $L$-functions.

Now let $f$ be a cuspidal eigenform on $\SL_2(\BZ)$. For any integer $d\geq0$,  the symmetric power lifting $\Sym^d f$ is an automorphic form on $\GL_{d+1}$. For a character $\chi$ of $\fh$, we define the associated $L$-function as follows. If $\chi$ is non-trivial,
let
\[L\left(\Sym^d f, \chi,s\right):=\sum_{n=1}^\infty\frac{\lambda_{\Sym^df}(n)a_\chi(n)}{n^s}\]
be the Rankin-Selberg $L$-function associated to $\Sym^d f$ and $f_\chi$. If $\chi$ is trivial, define $$L\left(\Sym^d f, \chi,s\right)=L\left(\Sym^df,s\right)L\left(\Sym^df,\chi_F,s\right).$$ Similarly, we can define the $L$-functions $L\left(f^{\otimes l},\chi,s\right)$ and $L\left(\left(\Sym^d f\right)^{\otimes l},\chi,s\right)$ by incorporating the coefficients $a_\chi(n)$. Leveraging the tensor product decompositions from previous sections, we obtain the following factorizations:
 \begin{equation}
     L\left(f^{\otimes l},\chi,s\right)=\prod_{i=0}^{[l/2]}L\left(\Sym^{l-2i}f,\chi,s\right)^{c_{l,i}},
 \end{equation}
  \begin{equation}\label{equation:chilfactor}
   L\left(\left(\Sym^d f\right)^{\otimes l},\chi,s\right)=\prod_{i=0}^{dl}L\left(\Sym^i f,\chi,s\right)^{K_{i,d,l}}.   
  \end{equation}
\section{The estimations via Perron's formula}
\subsection{The Dirichlet series}
Let $f$ be a cuspidal eigenform of weight $k$ on $\SL_2(\BZ)$,  let $Q$ be a binary quadratic form and let $\chi$ be a character of $\fh$. We define the following Dirichlet series, which are central to our analysis:
\begin{align*}
&D\left(f^{\otimes l},s\right):=\sum_{n=1}^\infty \frac{\lambda_f^l(n)}{n^s},&D&\left(\left(\Sym^df\right)^{\otimes l},s\right):=\sum_{n=1}^\infty \frac{\lambda_{\Sym^df}^l(n)}{n^s},\\  
&D\left(f^{\otimes l},Q,s\right):=\sum_{n=1}^\infty \frac{\lambda_f^l(n)r(n,Q)}{n^s},&D&\left(\left(\Sym^d f\right)^{\otimes l},Q,s\right):=\sum_{n=1}^\infty \frac{\lambda_{\Sym^d f}^l(n)r(n,Q)}{n^s},\\
&D\left(f^{\otimes l},\chi,s\right):=\sum_{n=1}^\infty \frac{\lambda_f^l(n)a_{\chi}(n)}{n^s},&D&\left(\left(\Sym^d f\right)^{\otimes l},\chi,s\right):=\sum_{n=1}^\infty \frac{\lambda_{\Sym^d f}^l(n)a_\chi(n)}{n^s}.
\end{align*}
From the expression for $r(n,Q)$ given in (\ref{equation:binarydecom}), we immediately obtain the relations
\begin{align*}
   D\left(f^{\otimes l},Q,s\right)&=\frac{w_D}{h(D)}\sum_{\chi}\overline{\chi(\fa_Q)}D\left(f^{\otimes l},\chi,s\right),\\ 
   D\left(\left(\Sym^d f\right)^{\otimes l},Q,s\right)&=\frac{w_D}{h(D)}\sum_{\chi}\overline{\chi(\fa_Q)}D\left(\left(\Sym^d f\right)^{\otimes l},\chi,s\right).
\end{align*}
The connection between these Dirichlet series and the complete $L$-functions is provided by the following proposition, which is an analogue of Theorem \ref{theorem:l-functions} for the twisted case.

\begin{prop}\label{proposition:dirichletseriescomp}
The following identities hold:
\begin{align*}
   &D\left(f^{\otimes l},s\right)=L\left(f^{\otimes l},s\right)U_{f,l}(s),D\left(\left(\Sym^d f\right)^{\otimes l},s\right)=L\left(\left(\Sym^df\right)^{\otimes l},s\right)U_{\Sym^d f,l}(s),\\ 
   &D\left(f^{\otimes l},\chi,s\right)=L\left(f^{\otimes l},\chi,s\right)U_{f,l,\chi}(s),D\left(\left(\Sym^d f\right)^{\otimes l},\chi,s\right)=L\left(\left(\Sym^df\right)^{\otimes l},\chi,s\right)U_{\Sym^d f,l,\chi}(s).
\end{align*}
       where each  $U_*(s)$ is an Euler product that is absolutely convergent for  $\Re(s)>1/2$ and uniformly convergent in the region $\Re(s)>1/2+\epsilon$ for any $\epsilon>0$.
\end{prop}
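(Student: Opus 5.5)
We compare Euler factors prime by prime. All Dirichlet series in the statement have multiplicative coefficients: $\lambda_f$, $\lambda_{\Sym^d f}$ and $a_\chi$ are Hecke eigenvalue sequences, and pointwise products of multiplicative functions are multiplicative. So each $D_*(s)$ equals $\prod_p\sum_{r\ge0}b(p^r)p^{-rs}$, and each $L_*(s)$ is an Euler product over $p$ of reciprocal characteristic polynomials. We define $U_*(s)$ as the quotient of local factors $D_{*,p}(s)/L_{*,p}(s)$, so the identities hold formally. The whole content is the convergence claim. We need to show that for every $p$ (with $p\nmid D$ handled uniformly, and the finitely many ramified $p\mid D$ treated separately) the local quotient has the shape $1+O(p^{-2\sigma}C^{r})$, with $C$ depending only on $d,l$. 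Then $\prod_p$ converges absolutely for $\sigma>1/2$, uniformly on $\sigma\ge1/2+\epsilon$.

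\textbf{Step 1 (coefficient at $p$).} By Proposition~\ref{prop:decomposition}, $\lambda_{\Sym^d f}(p)$ is the trace of $\Frob_p$ on $\Sym^d\rho_f$. Hence $\lambda_{\Sym^d f}(p)^l$ is the trace on $(\Sym^d\rho_f)^{\otimes l}$, and $\lambda_{\Sym^d f}(p)^l a_\chi(p)$ is the trace on $(\Sym^d\rho_f)^{\otimes l}\otimes V(\chi)$. For trivial $\chi$, $a_\chi(p)=1+\chi_F(p)$ and $V(\chi)=1\oplus\chi_F$, which matches the definition of $L(\Sym^d f,\chi,s)$. On the other hand, the $p^{-s}$ coefficient of $L_p$ for a representation $W$ is the trace of $\Frob_p$ on $W$ (the sum of Frobenius eigenvalues). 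Via the factorizations in the previous Proposition and \eqref{equation:chilfactor}, this trace is exactly the $p^{-s}$ coefficient of the product $\prod_i L(\Sym^i f,\chi,s)^{K_{i,d,l}}$. Thus $D_{*,p}$ and $L_{*,p}$ agree to first order.

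\textbf{Step 2 (bounding the tail).} Write $D_{*,p}(s)/L_{*,p}(s)=1+\sum_{r\ge2}u(p^r)p^{-rs}$. By Deligne's bound, all Satake parameters $\alpha_f(p),\beta_f(p)$ have modulus $1$, and $|a_\chi(p^r)|\le r+1$. Hence $|b(p^r)|\le (r+1)^{C}$, and the coefficients of $L_{*,p}^{-1}$ are polynomially bounded in $r$ with degree bounded by the total dimension $N=(d+1)^l$ (or $2(d+1)^l$ with the twist). The coefficients of the product then satisfy $|u(p^r)|\le C_1 r^{C_2}$ uniformly in $p$, so
\[
\sum_{r\ge2}|u(p^r)|p^{-r\sigma}\ll p^{-2\sigma}
\]
for $\sigma\ge 1/2+\epsilon$, uniformly in $p$. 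Since $\sum_p p^{-1-2\epsilon}<\infty$, the product converges absolutely and uniformly. At the finitely many primes $p\mid D$, where $a_\chi$ has a degenerate local factor, each local factor is just a nonvanishing absolutely convergent series for $\sigma>0$, so these primes do not affect convergence.

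\textbf{Main obstacle.} The key point is the first-order match in Step 1 for the twisted series. Since $D(\cdot,\chi,s)$ is built from $\lambda^l\cdot a_\chi$ rather than from a tensor-product $L$-function directly, we must check that $a_\chi(p)$ is the trace of $V(\chi)$ at unramified $p$. This holds: $a_\chi(p)=\sum_{N\fn=p}\chi(\fn)$ is the trace of the induced representation. This is exactly the argument of Theorem~\ref{theorem:l-functions}, applied to $\rho_1=(\Sym^d\rho_f)^{\otimes l}$ (iterated) and $\rho_2=V(\chi)$. We also need to note that multiplicativity at $p^r$ is not required beyond the uniform polynomial bound, which avoids computing $\lambda_{\Sym^d f}(p^r)$ explicitly.
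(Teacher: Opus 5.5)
Your proposal is correct and is essentially the argument the paper intends: the paper gives no separate proof and defers (through Theorem~\ref{theorem:l-functions}) to the Artin-representation case of its earlier work, which is this same prime-by-prime comparison — the $p^{-s}$ coefficients agree as traces of $\Frob_p$, and Deligne's bound makes every local quotient $1+O(p^{-2\sigma})$ uniformly in $p$. One small quibble: your claim that the local factors of $U_*$ at $p\mid D$ are nonvanishing is not justified, but it is also not needed, because the later argument uses only that $U_*$ is holomorphic and bounded.
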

\subsection{Subconvexity results for $L$-functions}
The proof of our main theorems relies on subconvexity estimates for various $L$-functions. We collect the necessary results here.
\begin{lem}[\cite{bourgain2017decoupling}]\label{lemma:subconvzeta}
    For any $\epsilon>0$ and any Dirichlet character $\chi$, we have
    \begin{equation}\label{equation:averagezeta}
      \zeta(\sigma+it)\ll(1+|t|)^{\frac{13}{42}(1-\sigma)+\epsilon}, \  L(\chi,\sigma+it)\ll(1+|t|)^{\frac{13}{42}(1-\sigma)+\epsilon} 
    \end{equation}
    uniformly for $1/2\leq\sigma\leq 1+\epsilon$ and $|t|\geq 1.$
   \end{lem}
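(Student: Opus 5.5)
This lemma is a citation of Bourgain's decoupling bound $\zeta(1/2+it)\ll |t|^{13/84+\epsilon}$, so the plan is to take that bound as the input and interpolate it against the trivial bound near $\sigma=1$ via the Phragmén–Lindelöf convexity principle. For Dirichlet $L$-functions the analogous $t$-aspect bound $L(\chi,1/2+it)\ll_\chi |t|^{13/84+\epsilon}$ (with the conductor of $\chi$ fixed) follows from the same decoupling and exponent-pair machinery. The fixed conductor only affects the implied constant, since the Dirichlet polynomial approximating $L(\chi,s)$ splits into residue classes mod $q$, and each class is treated exactly like $\zeta$. For $\zeta$ one could alternatively reduce to Hurwitz zeta functions.

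First I fix the two boundary lines. On $\sigma=1+\epsilon$ we have $|\zeta(s)|,|L(\chi,s)|\le\zeta(1+\epsilon)\ll_\epsilon 1$ by absolute convergence. On $\sigma=1/2$, Bourgain's theorem gives $\ll (1+|t|)^{13/84+\epsilon}$. The target exponent at $\sigma=1/2$ is exactly $\frac{13}{42}\cdot\frac12=\frac{13}{84}$, which matches.

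Next I apply Phragmén–Lindelöf in the strip $1/2\le\sigma\le 1+\epsilon$. To remove the pole of $\zeta$ at $s=1$ (and the pole of $L(\chi,s)$ when $\chi$ is principal), I work with $(s-1)\zeta(s)/(s+2)$ or a similar normalization. Then:
\begin{itemize}
\item both functions have finite order in the strip, so the principle applies;
\item it yields $\ll (1+|t|)^{\mu(\sigma)+\epsilon}$, where $\mu$ is linear in $\sigma$, equals $13/84$ at $\sigma=1/2$, and equals $0$ at $\sigma=1+\epsilon$;
\item hence $\mu(\sigma)=\frac{13}{84}\cdot\frac{1+\epsilon-\sigma}{1/2+\epsilon}\le\frac{13}{42}(1-\sigma)+O(\epsilon)$.
\end{itemize}
For $|t|\ge1$ the factor $(s-1)/(s+2)$ has size $\asymp 1$, so it can be dropped and the bound transfers back to $\zeta$ and $L(\chi,s)$ themselves.

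The only genuine content is Bourgain's $13/84$ estimate at $\sigma=1/2$, which I take as given from the reference. Its extension to $L(\chi,s)$ is the one step I expect to need care, but it is standard for fixed $\chi$.
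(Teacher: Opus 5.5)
Your argument is correct and is the natural unpacking of the paper's treatment, which is only a citation to Bourgain: the exponent $\frac{13}{42}(1-\sigma)$ is exactly the Phragm\'en--Lindel\"of interpolation between $\frac{13}{84}$ on $\sigma=\frac12$ and $0$ at $\sigma=1$, and your computation that moving the right edge to $1+\epsilon$ costs only $O(\epsilon)$ in the exponent is right. The one point the citation leaves implicit is the $t$-aspect bound for a fixed Dirichlet character, which Bourgain does not state; your reduction to residue classes mod $q$ (phases $t\log(m+a/q)$, i.e.\ Hurwitz zeta functions, which is presumably what you meant in place of ``for $\zeta$'') is the right way to supply it, since the Bombieri--Iwaniec--Huxley machinery behind his bound handles general phases $TF(m/M)$ under uniform derivative conditions.
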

\begin{lem}[\cite{good1982square}]
   Let $f$ be a modular form on $\SL_2(\BZ)$, for any $\epsilon>0$, we have
    \[L(f,\sigma+it)\ll(1+|t|)^{\frac{2}{3}(1-\sigma)+\epsilon}\]
    uniformly for $1/2\leq\sigma\leq 1+\epsilon$ and $|t|\geq 1.$
\end{lem}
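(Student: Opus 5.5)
The plan is to interpolate, via the Phragmén--Lindelöf convexity principle in the strip $1/2\leq\sigma\leq 1+\epsilon$, between a critical-line bound (Good's theorem) and the trivial bound on $\sigma=1+\epsilon$. The exponent $\frac{2}{3}(1-\sigma)$ is exactly the linear function equal to $1/3$ at $\sigma=1/2$ and to $0$ at $\sigma=1$. So everything reduces to two endpoint estimates and a growth condition.

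\textbf{Right edge.} On $\sigma=1+\epsilon$ the Dirichlet series $L(f,s)=\sum_n\lambda_f(n)n^{-s}$ converges absolutely. Deligne's bound gives $|\lambda_f(n)|\leq d(n)$, hence
\[
L(f,1+\epsilon+it)\ll_\epsilon 1 .
\]

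\textbf{Growth condition.} Since $f$ is a cusp form on $\SL_2(\BZ)$, the completed function $(2\pi)^{-s}\Gamma\left(s+\frac{k-1}{2}\right)L(f,s)$ is entire and satisfies a functional equation under $s\mapsto 1-s$. Stirling's formula then shows that $L(f,s)$ has at most polynomial growth in $|t|$ throughout the strip. This is the finite-order hypothesis that Phragmén--Lindelöf requires.

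\textbf{Left edge (Good's theorem).} On $\sigma=1/2$ I need
\[
L(f,1/2+it)\ll(1+|t|)^{1/3+\epsilon}.
\]
I would take this from Good's mean-square asymptotic
\[
\int_0^T|L(f,1/2+it)|^2\,dt=T\,(a\log T+b)+O\left((T\log T)^{2/3}\right).
\]
Subtracting this at $T$ and at $T+H$ with $H=T^{2/3}$ gives $\int_T^{T+H}|L(f,1/2+it)|^2dt\ll T^{2/3+\epsilon}$. A pointwise bound then follows from a local mean-value inequality. For example, Cauchy's integral formula on a small disc together with the convexity bound slightly to the right of the line gives $|L(f,1/2+iT)|^2\ll \log T\int_{T-1}^{T+1}|L(f,1/2+it)|^2dt+O(1)$. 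Combining the two yields $|L(f,1/2+iT)|^2\ll T^{2/3+\epsilon}$.

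\textbf{Interpolation.} Apply Phragmén--Lindelöf to $L(f,s)$ on the strip $1/2\leq\sigma\leq1+\epsilon$, $|t|\geq1$ (handling the boundary at $|t|=1$ as usual by working with $L(f,s)(s+i)^{-A}$-type auxiliary factors or a standard truncation). This gives
\[
L(f,\sigma+it)\ll(1+|t|)^{\frac13\cdot\frac{1+\epsilon-\sigma}{1/2+\epsilon}+\epsilon},
\]
and the exponent is at most $\frac23(1-\sigma)+O(\epsilon)$. Renaming $\epsilon$ gives the statement.

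\textbf{Main obstacle.} The only deep step is the critical-line bound, that is, Good's spectral mean-value theorem with error term $O(T^{2/3+\epsilon})$. I would cite \cite{good1982square} for it rather than reprove it. Everything else is routine convexity.
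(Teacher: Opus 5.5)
Your argument is correct and is what the paper's bare citation of Good amounts to: Good's critical-line bound $L(f,1/2+it)\ll|t|^{1/3+\epsilon}$, interpolated by Phragm\'en--Lindel\"of against the trivial bound on $\sigma=1+\epsilon$. Good records the pointwise form $L(f,1/2+it)\ll|t|^{1/3}(\log|t|)^{5/6}$ directly, so deriving it from the mean square is optional; if you do derive it, your local inequality as written (window $[T-1,T+1]$ with an $O(1)$ remainder) is imprecise, since the standard version needs a window of size about $|t-T|\le\log^2T$ with a decaying weight, but your short-interval bound on intervals of length $T^{2/3}$ covers that window, so the conclusion is unaffected.
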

\begin{lem}[\cite{dasgupta2024second}]
    Let $f$ be a modular form, for any $\epsilon>0$, we have
    \[L(\Sym^2 f,\sigma+it)\ll(1+|t|)^{\frac{8}{7}(1-\sigma)+\epsilon},\]
        uniformly for $1/2\leq\sigma\leq 1+\epsilon$ and $|t|\geq 1.$
\end{lem}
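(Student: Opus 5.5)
The estimate is a convexity-type interpolation of a single subconvex bound on the critical line. I will not reprove the deep input. The plan is to take from \cite{dasgupta2024second} the critical-line bound
\[L\left(\Sym^2 f,\tfrac12+it\right)\ll (1+|t|)^{\frac47+\epsilon},\]
which they obtain from a second-moment estimate for $\Sym^2 f$ in the $t$-aspect, and then extend it to the strip $1/2\leq\sigma\leq 1+\epsilon$ by the Phragm\'en--Lindel\"of principle. The exponent $\frac87(1-\sigma)$ is exactly the linear function that equals $\frac47$ at $\sigma=\frac12$ and $0$ at $\sigma=1$, which is why this is the natural route.

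\textbf{Right edge.} On the line $\sigma=1+\epsilon$, the Euler product of $L(\Sym^2 f,s)$ converges absolutely, because the Satake parameters $\alpha_f(p)^2,1,\beta_f(p)^2$ have absolute value $1$ by Deligne. Hence $L(\Sym^2 f,1+\epsilon+it)\ll_\epsilon 1$.

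\textbf{Left edge.} On $\sigma=\frac12$, the Dasgupta--Leung--Young bound gives $(1+|t|)^{4/7+\epsilon}$.

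\textbf{Hypotheses of Phragm\'en--Lindel\"of.} By Gelbart--Jacquet (or Proposition \ref{prop:decomposition} together with Newton--Thorne), $\Sym^2 f$ is a cuspidal automorphic representation of $\GL_3$. So $L(\Sym^2 f,s)$ is entire, satisfies a functional equation with gamma factor of degree $3$, and is of finite order in vertical strips. Stirling's formula applied to the functional equation gives polynomial growth in the strip.

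\textbf{Interpolation.} Apply the Phragm\'en--Lindel\"of convexity principle to $L(\Sym^2 f,s)$ in the strip between $\sigma=\frac12$ and $\sigma=1+\epsilon$. To make the exponents match exactly, multiply by an auxiliary factor such as $(s+1)^{-a(s)}$ with $a(s)$ linear in $s$. This gives
\[L(\Sym^2 f,\sigma+it)\ll (1+|t|)^{\frac47\cdot\frac{1+\epsilon-\sigma}{1/2+\epsilon}+\epsilon}.\]
Letting $\epsilon$ shrink and renaming $\epsilon$, this is $(1+|t|)^{\frac87(1-\sigma)+\epsilon}$ uniformly for $\frac12\leq\sigma\leq1+\epsilon$ and $|t|\geq1$.

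The only genuinely hard step is the critical-line bound $t^{4/7+\epsilon}$ itself. For that I would rely entirely on \cite{dasgupta2024second} and not attempt to reprove it. The interpolation is routine; the only care needed is to check that the region $\sigma\leq 1+\epsilon$ with the slightly shifted endpoint still yields exponent $\frac87(1-\sigma)+\epsilon$ after adjusting $\epsilon$.
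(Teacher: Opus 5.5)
Your proposal is correct and follows essentially the paper's route: the paper gives no argument of its own and simply imports the bound from \cite{dasgupta2024second}, and Phragm\'en--Lindel\"of interpolation between the critical-line bound $t^{4/7+\epsilon}$ and the trivial bound on $\Re(s)=1+\epsilon$ is exactly the routine step that citation tacitly relies on. One bookkeeping caveat: the $\epsilon$ in the range and the $\epsilon$ in the exponent cannot be the same, because at $\sigma=1+\epsilon$ the exponent $\frac87(1-\sigma)+\epsilon=-\epsilon/7$ is negative, whereas $|L(\Sym^2 f,1+\epsilon+it)|\gg_\epsilon 1$ by the absolutely convergent Euler product; so your interpolation, run with right endpoint $1+\epsilon'$ and $\epsilon'=7\epsilon/15$, proves the bound for $1/2\leq\sigma\leq 1+\epsilon'$, which is all the paper ever uses.
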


\begin{lem}[\cite{perelli1982general}]
    Let $L(s)$ be a general $L$-function of degree $m$, and assume that $L(s)$ converges absolutely for $\Re(s)>1$. Moreover, assume that $L(s)$ is entire except possibly simple poles at $s=0,1$, and it satisfies a functional equation of Riemann type. Then for any $\epsilon>0,$ we have
    \[L(\sigma+it)\ll (1+|t|)^{\frac{m}{2}(1-\sigma)+\epsilon}\]
    uniformly for $1/2\leq\sigma\leq 1+\epsilon $ and $|t|\geq 1$.
    And for $T\geq1$, we have
    \[\int_T^{2T}|L(1/2+it)|^2dt\ll T^{m/2+\epsilon}.\]
\end{lem}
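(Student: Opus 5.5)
The plan treats the two assertions separately. The pointwise bound comes from the Phragmén–Lindelöf convexity principle applied to the functional equation. The mean square comes from an approximate functional equation combined with the Montgomery–Vaughan mean value theorem.

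\textbf{Pointwise bound.} Write the functional equation as $\Phi(s)=\omega\overline{\Phi}(1-s)$, where
\[\Phi(s)=Q^s\prod_{j=1}^r\Gamma(\lambda_j s+\mu_j)L(s),\qquad 2\sum_j\lambda_j=m.\]
To remove the possible poles I pass to $G(s)=s(s-1)L(s)$, which is entire.
\begin{enumerate}
\item On the line $\sigma=1+\epsilon$, absolute convergence gives $L(s)\ll 1$, so $G(s)\ll |t|^2$.
\item On the line $\sigma=-\epsilon$, I rewrite $L(s)=\omega\, Q^{1-2s}\,\frac{\prod\Gamma(\lambda_j(1-s)+\bar\mu_j)}{\prod\Gamma(\lambda_j s+\mu_j)}\,\overline{L}(1-s)$. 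Stirling's formula gives that the gamma ratio has size $\asymp |t|^{m(1/2-\sigma)}$. Since $\overline{L}(1-s)$ is bounded there, $L(-\epsilon+it)\ll |t|^{m/2+m\epsilon}$.
\item Inside the strip, the same functional equation and Stirling show that $G$ has finite order. This is where the hypothesis of a Riemann-type functional equation is used, so that Phragmén–Lindelöf applies.
\end{enumerate}
Interpolating linearly in $\sigma$ between the exponent $m/2+O(\epsilon)$ at $\sigma=-\epsilon$ and $0$ at $\sigma=1+\epsilon$ gives $L(\sigma+it)\ll |t|^{\frac m2(1-\sigma)+O(\epsilon)}$ for $-\epsilon\le\sigma\le1+\epsilon$. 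Dividing back by $s(s-1)$ costs nothing for $|t|\ge1$, and restricting to $1/2\le\sigma\le1+\epsilon$ and renaming $\epsilon$ gives the first claim.

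\textbf{Mean square.} For $T\le t\le 2T$ I use a smoothed approximate functional equation with both lengths $X=Y\asymp T^{m/2}$ (so $XY\asymp T^m$):
\[L(\tfrac12+it)=\sum_{n}\frac{a(n)}{n^{1/2+it}}V\!\left(\frac{n}{X}\right)+\varepsilon(t)\sum_n\frac{\overline{a(n)}}{n^{1/2-it}}V\!\left(\frac{n}{Y}\right)+O(T^{-A}).\]
Here $|\varepsilon(t)|=1$ up to lower-order terms, by the same Stirling computation as above. The weight $V$ is obtained as usual from the integral $\frac{1}{2\pi i}\int \frac{\Phi(1/2+it+w)}{\Phi(1/2+it)}\,G(w)\frac{dw}{w}$ and decays rapidly beyond $n\gg T^{m/2+\epsilon}$. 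Squaring, integrating over $[T,2T]$, and applying Montgomery–Vaughan to each sum gives
\[\int_T^{2T}\Big|\sum_{n\le N}b(n)n^{-it}\Big|^2dt\ll\sum_{n\le N}(T+n)|b(n)|^2\]
with $b(n)=a(n)n^{-1/2}$ and $N=T^{m/2+\epsilon}$. This is $\ll (T+N)N^{\epsilon}\ll T^{m/2+\epsilon}$ for $m\ge2$; the case $m=1$ gives $T^{1+\epsilon}$, consistent with the classical bound.

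\textbf{Main obstacle.} The weak point is this last step. It needs the average coefficient bound $\sum_{n\le N}|a(n)|^2\ll N^{1+\epsilon}$, which is not literally among the stated hypotheses. For every $L$-function to which the lemma will be applied (products of $L(\Sym^i f,s)$ and their twists by $a_\chi(n)$), this bound does hold. The reason is Deligne's bound, which gives $|a(n)|\le d_m(n)$. I would either add this as an implicit hypothesis of ``general $L$-function'' in Perelli's sense, or derive it from Rankin–Selberg theory. A second, milder technical point is keeping the approximate functional equation uniform in $t$ with a negligible error term. I would handle it by shifting the $w$-contour far to the right and to the left using the Stirling estimates already obtained.
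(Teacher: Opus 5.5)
The paper does not prove this lemma; it quotes it from Perelli. Your route is the standard argument behind that citation: Phragm\'en--Lindel\"of between $\sigma=-\epsilon$ and $\sigma=1+\epsilon$ for the pointwise bound, then an approximate functional equation of length $T^{m/2+\epsilon}$ with the Montgomery--Vaughan mean value theorem for the mean square. Your exponent bookkeeping is correct: the gamma ratio is of size $|t|^{m(1/2-\sigma)}$, and the final bound is $(T+N)N^{\epsilon}$ with $N=T^{m/2+\epsilon}$. However, two points do not hold up as written.

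\textbf{Finite order.} Step 3 of the pointwise bound is wrongly justified. The functional equation cannot show that $G(s)=s(s-1)L(s)$ has finite order inside the strip. The reflection $s\mapsto 1-s$ maps the strip $-\epsilon\le\sigma\le 1+\epsilon$ onto itself. So the functional equation only moves the bound from the right edge to the left edge; it says nothing about growth in the interior, which is exactly what Phragm\'en--Lindel\"of needs. Finite order has to be a hypothesis. It belongs to the standard axioms for general $L$-functions (as in the Selberg class), just like the coefficient bound $a(n)\ll n^{\epsilon}$ you flagged. For the $L$-functions the paper actually uses, namely $L(\Sym^i f,s)$ and their Rankin--Selberg twists by $f_\chi$, it comes from automorphic theory, since these are bounded in vertical strips.

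\textbf{The case $m=1$ and the mean-square step.} The mean-square claim is false as stated when $m=1$. The Riemann zeta function satisfies every hypothesis with $m=1$, yet $\int_T^{2T}|\zeta(1/2+it)|^2\,dt\sim T\log T$, which is not $\ll T^{1/2+\epsilon}$. Your argument gives $T^{\max(1,m/2)+\epsilon}$, which is the correct general bound. You should state outright that the second assertion requires $m\ge 2$, rather than call the $m=1$ output ``consistent''; the paper never needs $m=1$ there.

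There is also a technical gap in the mean-square step. The weight $V=V_t$ depends on $t$, so your two sums are not Dirichlet polynomials with $t$-independent coefficients, and Montgomery--Vaughan does not apply directly. To fix this:
\begin{enumerate}
\item Truncate at $N=T^{m/2+\epsilon}$.
\item Write $V_t$ as its Mellin integral on $\Re(w)=1/\log T$.
\item Bound the $t$-dependent factor $\gamma(1/2+it+w)\,\gamma(1/2+it)^{-1}G(w)X^{w}w^{-1}$, where $\gamma(s)=Q^s\prod_j\Gamma(\lambda_j s+\mu_j)$ and $G(w)$ is the smoothing function. The bound should hold uniformly for $T\le t\le 2T$ by an integrable function of $w$. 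The gamma ratio is at most $e^{O(|w|)}$ on that line, and the Gaussian decay of $G$ absorbs this.
\item Apply Cauchy--Schwarz in $w$, and only then use Montgomery--Vaughan for each fixed $w$.
\end{enumerate}
The factor $\varepsilon(t)$, with $|\varepsilon(t)|\asymp 1$, can simply be pulled out in absolute value. With these repairs the proof is complete for $m\ge 2$.
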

\subsection{Proof of the main theorems}
The proofs of these theorems follow a standard approach via Perron's inversion formula, as detailed in \cite{Yang2024second} and \cite{Yang2024first}. For brevity, we present only the proof of Theorem \ref{thm:binary}, as the other cases are analogous.

We begin with the summation
\[S(x):=\sum_{Q(n_1,n_2)\leq x}\lambda^l_{\Sym^df}(Q(n_1,n_2))=\sum_{n\leq x}\lambda^l_{\Sym^df}(n)r(n,Q).\]
Using the expression for $r(n,Q)$ from (\ref{equation:binarydecom}), this becomes
\[S(x)= \frac{w_D}{h(D)}\sum_{\chi}\overline{\chi(\fa_Q)}\sum_{n\leq x}\lambda^l_{\Sym^df}(n)a_\chi(n),\]
where the sum runs over all characters of the ideal class group. It suffices to estimate the inner sum for each character  $\chi$.
By Perron's formula, for $b=1+\epsilon$ and a parameter $T$ with $1\leq T\leq x$, we have
\[\sum_{n\leq x}\lambda^l_{\Sym^df}(n)a_\chi(n)=\frac{1}{2\pi i}\int_{b-iT}^{b+iT}D\left(\left(\Sym^d f\right)^{\otimes l},\chi,s\right)\frac{x^s}{s}ds+O\left(\frac{x^b}{T}\right),\]
Shifting the contour of integration to the line  $\Re(s)=\frac{1}{2}+\epsilon$, an application of the Cauchy residue theorem yields
\[\sum_{n\leq x}\lambda^l_{\Sym^df}(n)a_\chi(n)=\frac{1}{2\pi i}\left\{\int_{\frac{1}{2}+\epsilon-iT}^{\frac{1}{2}+\epsilon+iT}+\int_{\frac{1}{2}+\epsilon+iT}^{b+iT}+\int_{b-iT}^{\frac{1}{2}+\epsilon-iT}\right\}D\left(\left(\Sym^d f\right)^{\otimes l},\chi,s\right)\frac{x^s}{s}ds\]
\[+\Res_{s=1}D\left(\left(\Sym^d f\right)^{\otimes l},\chi,s\right)\frac{x^s}{s}+\RO\left(\frac{x^{1+\epsilon}}{T}\right).\]
From Proposition \ref{proposition:dirichletseriescomp}, we have the factorization
\[D\left(\left(\Sym^d f\right)^{\otimes l},\chi,s\right)=L\left(\left(\Sym^df\right)^{\otimes l},\chi,s\right)U_{\Sym^d f,l,\chi}(s),\]
where $U_{\Sym^d f,l,\chi}(s)$ is holomorphic and bounded by $O(1)$ in $\Re(s)>1/2+\epsilon$.
The order of poles at $s=1$ is determined by the factorization (\ref{equation:chilfactor}):

$$\ord_{s=1}D\left(\left(\Sym^d f\right)^{\otimes l},\chi,s\right)=\ord_{s=1}L\left(\left(\Sym^d f\right)^{\otimes l},\chi,s\right)=\begin{cases}
    0\ \ \ \ \ \  \text{if $\chi$ is not trivial.}\\
    K_{0,d,l}\ \text{if $\chi$ is trivial.}
\end{cases}$$
Consequently, for the trivial character $\chi$ , the residue $\Res_{s=1}D\left(\left(\Sym^d f\right)^{\otimes l},\chi,s\right)\frac{x^s}{s}$ contributes a main term of the form $xP_{0,d,l}(\log x)$, where $P_{0,d,l}$ is a polynomial of degree $K_{0,d,l}-1.$ For non-trivial $\chi$, there is no pole at $s=1$, so the residue is 0.

Let us define the integrals along the different segments of the contour: 
\begin{align*}
    J_1=\int_{\frac{1}{2}+\epsilon-iT}^{\frac{1}{2}+\epsilon+iT}D\left(\left(\Sym^d f\right)^{\otimes l},\chi,s\right)\frac{x^s}{s}ds,\\
    J_2=\int_{\frac{1}{2}+\epsilon+iT}^{b+iT}D\left(\left(\Sym^d f\right)^{\otimes l},\chi,s\right)\frac{x^s}{s}ds,\\
    J_3=\int_{b-iT}^{\frac{1}{2}+\epsilon-iT}D\left(\left(\Sym^d f\right)^{\otimes l},\chi,s\right)\frac{x^s}{s}ds.
\end{align*}

We now proceed to estimate these integrals to determine the error terms.

\noindent\textbf{Estimate for $J_1$:}
If $\chi$ is  trivial,  we derive
\begin{align*}
    J_1&\ll x^{1/2+\epsilon}+x^{1/2+\epsilon}\int_{1}^T \left|L\left(\left(\Sym^df\right)^{\otimes l},\chi,1/2+\epsilon+it\right)\right|t^{-1}dt\\
&\ll x^{1/2+\epsilon}+x^{1/2+\epsilon}\log T\max_{T_1\leq T}\left\{T_1^{-1}\int_{T_1/2}^{T_1}\left|L\left(\left(\Sym^df\right)^{\otimes l},\chi,1/2+\epsilon+it\right)\right|dt\right\}\\
&\ll x^{1/2+\epsilon}+x^{1/2+\epsilon}T^{-1}T^{2\left(\frac{13}{84}K_{0,d,l}+\frac{1}{3}K_{1,d,l}+\frac{4}{7}K_{2,d,l}+\sum_{i=3}^\infty\frac{i+1}{4}K_{i,d,l}\right)+\epsilon}\\
&\ll x^{1/2+\epsilon}+x^{1/2+\epsilon}T^{\frac{(d+1)^l}{2}-\frac{4}{21}K_{0,d,l}-\frac{1}{3}K_{1,d,l}-\frac{5}{14}K_{2,d,l}-1+\epsilon},
\end{align*}
where we have used the identity $\sum\limits_{i=0}^\infty K_{i,d,l}=(d+1)^l$.
If $\chi$ is non-trivial, the estimate becomes
\begin{align*}
    J_1&\ll x^{1/2+\epsilon}+x^{1/2+\epsilon}T^{-1}T^{\frac{1}{3}K_{0,d,l}+\sum_{i=1}^\infty \frac{(i+1)}{2}K_{i,d,l}+\epsilon}\\
    &\ll x^{1/2+\epsilon}+x^{1/2+\epsilon}T^{\frac{(d+1)^l}{2}-\frac{1}{6}K_{0,d,l}-1+\epsilon}.
\end{align*}

\noindent\textbf{Estimate for $J_2+J_3$:} The horizontal integrals can be bounded by
\[J_2+J_3\ll\sup_{1/2+\epsilon\leq\sigma\leq 1+\epsilon}x^\sigma T^{-1}\left|L\left(\left(\Sym^d f\right)^{\otimes l},\chi,s\right)\right|.\]
Applying the decomposition formula (\ref{equation:chilfactor}) and the subconvexity bounds, we obtain the following estimates. 

For trivial $\chi$:
\begin{align*}
 J_2+J_3&\ll x^{\frac{1}{2}+\epsilon}T^{-1}T^{2\left(\frac{13}{84}K_{0,d,l}+\frac{1}{3}K_{1,d,l}+\frac{4}{7}K_{2,d,l}+\sum_{i=3}^\infty\frac{i+1}{4}K_{i,d,l}+\epsilon\right)}+\frac{x^{1+\epsilon}}{T}\\ 
 &=x^{1/2+\epsilon}T^{\frac{(d+1)^l}{2}-\frac{4}{21}K_{0,d,l}-\frac{1}{3}K_{1,d,l}-\frac{5}{14}K_{2,d,l}-1+\epsilon}+\frac{x^{1+\epsilon}}{T}.
\end{align*}

For non-trivial $\chi$:
\begin{align*}
 J_2+J_3&\ll x^{1/2+\epsilon}T^{-1}T^{\frac{1}{3}K_{0,d,l}+\sum_{i=1}^\infty \frac{(i+1)}{2}K_{i,d,l}+\epsilon}+\frac{x^{1+\epsilon}}{T}\\ 
 &=x^{1/2+\epsilon}T^{\frac{(d+1)^l}{2}-\frac{1}{6}K_{0,d,l}-1+\epsilon}+\frac{x^{1+\epsilon}}{T}.
\end{align*}

The parameter $T$  is now chosen to balance the error terms.
 We first consider the case where the class number of $Q$ is one. In this case, only the trivial character contributes. Choosing
$$T=x^{1/\left((d+1)^l-\frac{8}{21}K_{0,d,l}-\frac{2}{3}K_{1,d,l}-\frac{5}{7}K_{2,d,l}\right)},$$
we obtain the asymptotic formula
\[\sum_{n\leq x}\lambda^l_{\Sym^df}(n)a_\chi(n)=xP_{0,d,l}(\log(x))+O\left(x^{1-1/\left((d+1)^l-\frac{8}{21}K_{0,d,l}-\frac{2}{3}K_{1,d,l}-\frac{5}{7}K_{2,d,l}\right)+\epsilon}\right).\]
Next assume the class number is greater than 1. The main term comes only from the trivial character, but the error terms include contributions from all characters. Choosing
 $$T=x^{\frac{3}{3(d+1)^l-K_{0,d,l}}},$$ we obtain
\[\sum_{n\leq x}\lambda^l_{\Sym^df}(n)a_\chi(n)=xP_{0,d,l}(\log(x))+O\left(x^{1-\frac{3}{3(d+1)^l-K_{0,d,l}}+\epsilon}\right).\]
Summing over all characters  $\chi$, and using (\ref{equation:binarydecom}) completes the proof of Theorem \ref{thm:binary}. 
\bibliography{bib/ref}

@misc{arXiv:2601.17079,
 author = {Venkatasubbareddy, K.},
 title = {Generalization on the higher moments of the {Fourier} coefficients of symmetric power ${L}$-functions},
 year = {2026},
 howpublished = {Preprint, {arXiv}:2601.17079 [math.{NT}] (2026)},
 url = {https://arxiv.org/abs/2601.17079},
 arXiv = {arXiv:2601.17079}
}

@article{bourgain2017decoupling,
  author = {Bourgain, J.},
 title = {Decoupling, exponential sums and the {Riemann} zeta function},
 fjournal = {Journal of the American Mathematical Society},
 journal = {J. Amer. Math. Soc.},
 issn = {0894-0347},
 volume = {30},
 number = {1},
 pages = {205--224},
 year = {2017},
}

@article{dasgupta2024second,
  title={The second moment of the $ \mathrm{GL}_3 $ standard {L}-function on the critical line},
  author={Dasgupta, A. and Leung, W, H. and Young, M,P.},
  journal={arXiv preprint arXiv:2407.06962},
  year={2024}
}

@article{fomenko2006identities,
  title={Identities Involving Coefficients of Automorphic {L}-Functions.},
  author={Fomenko, O.M.},
  fjournal = {Journal of Mathematical Sciences },
journal = {J. Math. Sci.},
  volume={133},
  number={6},
  year={2006}
}

@article{fomenko2008mean,
  title={Mean value theorems for automorphic {L}-functions},
  author={Fomenko, O.M.},
  fjournal={St. Petersburg Mathematical Journal},
journal={St. Petersburg Math. J. },
  volume={19},
  number={5},
  pages={853--866},
  year={2008}
}

@book{fulton2013representation,
  title={Representation theory: a first course},
  author={Fulton, W. and Harris, J.},
  volume={129},
  year={2013},
  publisher={Springer Science \& Business Media}
}

@article{good1982square,
  title={The square mean of {D}irichlet series associated with cusp forms},
  author={Good, A.},
  journal={Mathematika},
  volume={29},
  number={2},
  pages={278--295},
  year={1982},
  publisher={London Mathematical Society}
}

@article{he2019integral,
   author = {He, X.},
 title = {Integral power sums of {Fourier} coefficients of symmetric square {{\(L\)}}-functions},
 fjournal = {Proceedings of the American Mathematical Society},
 journal = {Proc. Am. Math. Soc.},
 issn = {0002-9939},
 volume = {147},
 number = {7},
 pages = {2847--2856},
 year = {2019},
 language = {English},
 doi = {10.1090/proc/14516},
 zbMATH = {7073440},
 Zbl = {1431.11062}
}

@article{huang2021rankin,
  title={On the {R}ankin--{S}elberg problem},
  author={Huang, B.},
  journal={Math. Ann.},
  volume={381},
  number={3},
  pages={1217--1251},
  year={2021},
  publisher={Springer}
}

@article{kim2002functorial,
  title={Functorial products for $\mathrm{GL}_2\times \mathrm{GL}_3$ and the symmetric cube for $\mathrm{GL}_2$},
  author={Kim, H. H. and Shahidi, F.},
  journal={Ann. Math.},
  pages={837--893},
  year={2002},
  publisher={JSTOR}
}

@article{kim2002cuspidality,
  title={Cuspidality of symmetric powers with applications},
  author={Kim, H. H. and Shahidi, F.},
  journal={Duke Math. J.},
  volume={115},
  number={1},
  pages={177--197},
  year={2002}
}

@article{kim2003functoriality,
  title={Functoriality for the exterior square of $\mathrm{GL}_4$ and symmetric fourth of $\mathrm{GL}_2$, {A}ppendix 1 by {D}inakar {R}amakrishnan, {A}ppendix 2 by {H}enry {H}. {K}im and {P}eter {S}arnak},
  author={Kim, H.},
  journal={J. Amer. Math. Soc.},
  volume={16},
  pages={139--183},
  year={2003}
}

@inproceedings{kloosterman1927asymptotische,
  title={Asymptotische {F}ormeln f{\"u}r die {F}ourier-koeffizienten ganzer {M}odulformen},
  author={Kloosterman, H. D.},
  booktitle={ Abh. Math. Sem. Univ. Hamburg},
  volume={5},
  pages={337--352},
  year={1927},
  organization={Springer}
}

@article{lau2011integral,
  title={Integral power sums of {H}ecke eigenvalues},
  author={Lau, Y.-K. and L{\"u}, G. and Wu, J.},
  journal={ Acta Arith.},
  volume={150},
  number={2},
  pages={193--207},
  year={2011}
}

@article{liu2023asymptotic,
  title={On the asymptotic distribution of {F}ourier coefficients of cusp forms},
  author={Liu, H.},
  journal={Bull. Braz. Math. Soc. (N.S.) },
  volume={54},
  number={2},
  pages={21},
  year={2023},
  publisher={Springer}
}

@article{liu2023average,
  title={The Average Behavior of {F}ourier Coefficients of Symmetric Power {L}-Functions},
  author={Liu, H.},
  journal={Bull. Malays. Math. Sci. Soc. },
  volume={46},
  number={6},
  pages={193},
  year={2023},
  publisher={Springer}
}

@article{lu2009average,
  title={Average behavior of {F}ourier coefficients of cusp forms},
  author={L{\"u}, G.},
  journal={Proc. Am. Math. Soc.},
  volume={137},
  number={6},
  pages={1961--1969},
  year={2009}
}

@article{lu2011higher,
  title={On higher moments of {F}ourier coefficients of holomorphic cusp forms},
  author={L{\"u}, G.},
  journal={ Can. J. Math.},
  volume={63},
  number={3},
  pages={634--647},
  year={2011},
  publisher={Cambridge University Press}
}

@article{luo2021asymptotics,
  title={Asymptotics for the {D}irichlet coefficients of symmetric power {L}-functions},
  author={Luo, S. and Lao, H. and Zou, A.},
  journal={Acta Arith.},
  volume={199},
  pages={253--268},
  year={2021},
  publisher={Instytut Matematyczny Polskiej Akademii Nauk}
}

@article{newton2021symmetric1,
  author = {Newton, J. and Thorne, J. A.},
 title = {Symmetric power functoriality for holomorphic modular forms},
 fjournal = {Publications Math{\'e}matiques},
 journal = {Publ. Math., Inst. Hautes {\'E}tud. Sci.},
 issn = {0073-8301},
 volume = {134},
 pages = {1--116},
 year = {2021},
}

@article{newton2021symmetric2,
   author = {Newton, J. and Thorne, J. A.},
 title = {Symmetric power functoriality for holomorphic modular forms. {II}.},
 fjournal = {Publications Math{\'e}matiques},
 journal = {Publ. Math., Inst. Hautes {\'E}tud. Sci.},
 issn = {0073-8301},
 volume = {134},
 pages = {117--152},
 year = {2021},
}

@article{perelli1982general,
  title={General {L}-functions},
  author={Perelli, A.},
  journal={Ann. Mat. Pura Appl.},
  volume={130},
  pages={287--306},
  year={1982},
  publisher={Springer}
}

@inproceedings{rankin1939contributions,
  title={Contributions to the theory of {R}amanujan's function $\tau (n)$ and similar arithmetical functions:\uppercase\expandafter{\romannumeral 2} . {T}he order of the {F}ourier coefficients of integral modular forms},
  author={Rankin, R. A.},
  booktitle={Proc. Camb. Philos. Soc. },
  volume={35},
  number={3},
  pages={357--372},
  year={1939},
  organization={Cambridge University Press}
}

@article{rankin1983sums,
  title={Sums of powers of cusp form coefficients},
  author={Rankin, R. A.},
  journal={Math. Ann.},
  volume={263},
  pages={227--236},
  year={1983},
  publisher={Springer}
}

@article{selberg1940bemerkungen,
  title={Bemerkungen {\"u}ber eine {D}irichletsche {R}eihe, die mit der {T}heorie der {M}odulformen nahe verbunden ist},
  author={Selberg, A.},
  journal={Arch. Math. Naturvid.},
  volume={43},
  pages={47},
  year={1940}
}

@article{xu2022general,
  title={General asymptotic formula of {F}ourier coefficients of cusp forms over sum of two squares},
  author={Xu, C.},
  journal={J. Number Theory},
  volume={236},
  pages={214--229},
  year={2022},
  publisher={Elsevier}
}

@article{weil1948some,
  title={On some exponential sums},
  author={Weil, A.},
  journal={ Proc. Nat. Acad. Sci. U.S.A.},
  volume={34},
  number={5},
  pages={204--207},
  year={1948},
  publisher={National Acad Sciences}
}

@article{wu2009power,
  title={Power sums of {H}ecke eigenvalues and application},
  author={Wu, J.},
  journal={Acta Arith.},
  volume={137},
  number={4},
  pages={333--344},
  year={2009},
  publisher={Institute of Mathematics Polish Academy of Sciences}
}

@article{Yang2024first,
  title={Higher moments related to {D}edekind zeta functions of non-normal fields},
  author={Yang, J. and  Yang, Z.},
journal={J.Number theory},
  volume = {278},
pages = {547-569},
year = {2026},
}

@article{Yang2024second,
  title={Averages of coefficients of {D}edekind zeta functions over perfect powers},
  author={Yang,J. and Yang,Z.},
   year={2025},
journal={preprint},
}

@article{Yang2026higher,
 author = {Yang, J.},
 title = {Higher moments for non-normal fields with {Galois} group {{\(A_d\)}} and {{\(S_d\)}}},
 fjournal = {Journal of Number Theory},
 journal = {J. Number Theory},
 issn = {0022-314X},
 volume = {280},
 pages = {370--389},
 year = {2026},
 language = {English},
 doi = {10.1016/j.jnt.2025.08.018},
 zbMATH = {8113861}
}

@article{Yang2026second,
  title={Generating {D}irichlet series for arithmetic functions associated to {G}alois representations and applications},
  author={Yang,J.},
   year={2026},
journal={submitted},
}

@article{zhai2013average,
  title={Average behavior of {F}ourier coefficients of cusp forms over sum of two squares},
  author={Zhai, S.},
  journal={J. Number Theory},
  volume={133},
  number={11},
  pages={3862--3876},
  year={2013},
  publisher={Elsevier}
}
\bibliographystyle{plain}

\end{document}